\documentclass[12pt]{article}
\usepackage[T1]{fontenc}
\usepackage[utf8]{inputenc}
\usepackage{lmodern}
\usepackage{graphicx} 
\usepackage{amsmath}
\usepackage{amssymb}
\usepackage{amsthm}
\usepackage{mathtools}
\usepackage{xcolor}

\usepackage{tikz-cd}
\usepackage{xurl}
\usepackage{hyperref}

\usepackage{enumitem}

\usepackage[backend=biber,style=numeric-comp,sorting=none]{biblatex}
\newtheorem{thm}{Theorem}[section]
\newtheorem{prop}[thm]{Proposition}

\newtheorem{cor}[thm]{Corollary}

\theoremstyle{definition}
\newtheorem{rem}[thm]{Remark}

\newcommand{\bN}{\mathbb{N}}
\newcommand{\bC}{\mathbb{C}}
\newcommand{\bT}{\mathbb{T}}
\newcommand{\bZ}{\mathbb{Z}}
\newcommand{\bR}{\mathbb{R}}
\newcommand{\bD}{\mathbb{D}}

\newcommand{\cS}{\mathcal{S}}
\newcommand{\cA}{\mathcal{A}}
\newcommand{\cF}{\mathcal{F}}
\newcommand{\cC}{\mathcal{C}}
\newcommand{\cD}{\mathcal{D}}
\newcommand{\cB}{\mathcal{B}}

\title{Commutative topological algebras on translation-invariant reproducing kernel Hilbert spaces}
\author{Miguel Angel Rodriguez Rodriguez}

\begin{document}

\maketitle

\begin{abstract}
We study commutative topological algebras naturally associated with translation-invariant reproducing kernel Hilbert spaces whose direct integral decomposition has one-dimensional fibers. 
Starting from the bounded algebra of translation-invariant operators, we pass to a common dense domain generated by reproducing kernels and identify the corresponding diagonalizable operators with multiplication by symbols in an intersection of weighted $L^2$-spaces.  On the symbol side this gives a canonical space $\mathcal F_0$ and a maximal multiplicative subalgebra $\mathcal F_M$, which is a complete locally convex $*$-algebra.  Transporting the structure back yields corresponding algebras of operators and integral kernels.  We also discuss when the inclusions
\[
L^\infty(\Omega)=\mathcal F_\infty\subset \mathcal F_M\subset \mathcal F_0
\]
are strict, and illustrate the results with vertical and radial operators on classical Bergman and Fock spaces.
\end{abstract}

\medskip
\noindent\textbf{Keywords and phrases.} Translation-invariant reproducing kernel Hilbert spaces; unbounded operators; commutative topological algebras; integral kernels; Bergman spaces; Fock spaces.

\medskip
\noindent\textbf{2020 Mathematics Subject Classification.} Primary 46E22, 47B32; Secondary 46H35, 47A05, 43A25.

\section{Introduction}

Reproducing kernel Hilbert spaces, or RKHSs, and the operators acting on them form an active area of research with connections to function theory, operator theory, harmonic analysis, quantum mechanics and machine learning.

In this paper we study RKHSs $H$ embedded in $L^2(G\times Y,\nu\otimes\lambda)$, where $G$ is a locally compact abelian group with Haar measure $\nu$, $(Y,\lambda)$ is a measure space, and $H$ is invariant under translations in the $G$-variable.  The precise assumptions are recalled in Section \ref{sec:preliminaries}.  This setting includes, in some cases after a unitary transformation, classical examples such as Bergman, Fock and Hardy spaces.

In \cite{HerreraYanezMaximenkoRamosVazquez2022}, it was shown that spaces of this type admit a direct integral representation
\[
R\colon H\longrightarrow \int_{\Omega} H_\xi\,d\widehat{\nu}(\xi),
\]
where $\Omega\subset \widehat{G}$ and $(H_\xi)_{\xi\in\Omega}$ is a measurable family of Hilbert spaces.  When the fibers $H_\xi$ are one-dimensional, this representation identifies the algebra $\cS_\infty$ of bounded $G$-translation-invariant operators on $H$ with the multiplication algebra $L^\infty(\Omega)$.  Thus $\cS_\infty$ is a commutative $W^*$-algebra.

A second realization of the same algebra is obtained from integral kernels.  Indeed, every bounded operator $T$ on $H$ has an integral kernel $K_T$ defined in terms of the reproducing kernel of $H$.  In \cite{BaisMaximenkoVenkuNaidu2025}, the algebraic structure of $L^\infty(\Omega)$ was transported to these kernels.  This gives a commutative $W^*$-algebra $\cA_\infty$ of functions on $G\times Y\times Y$, with a convolution-type product denoted here by $\odot$.  In the bounded case, one therefore has three equivalent models:
\[
\cS_\infty\longleftrightarrow \cA_\infty\longleftrightarrow L^\infty(\Omega).
\]

The purpose of this paper is to extend this picture to a natural class of unbounded operators.  Let
\[
\cD_0=\operatorname{span}\{K_{u,v}\colon (u,v)\in G\times Y\}
\]
be the dense subspace generated by the reproducing kernels of $H$.  The bounded kernel algebra $\cA_\infty$ is contained in a larger space $\cA_0$, whose elements define possibly unbounded diagonalizable operators on the common domain $\cD_0$.  We denote the corresponding space of operators by $\cS_0$.
The question motivating this work is how far the product $\odot$ on $\cA_\infty$ can be extended inside $\cA_0$, and what this extension means on the operator and symbol sides.

Our first main result identifies the symbol model for $\cS_0$.  We show that the operators in $\cS_0$ are represented, through $R$, by multiplication operators whose symbols belong to a space
\[
\cF_0=\bigcap_{y\in Y} L^2(\Omega,\mu_y).
\]
We then introduce
\[
\cF_M=\{a\in\cF_0\colon a^m\in\cF_0\text{ for every }m\in\bN\}.
\]
This is the largest subalgebra of $\cF_0$ with respect to pointwise multiplication.  We prove that $\cF_M$ is a complete commutative topological $*$-algebra and that
\[
L^\infty(\Omega)=\cF_\infty\subset\cF_M\subset\cF_0.
\]

Transporting this structure back to operators and kernels gives intermediate algebras
\[
\cS_\infty\subset\cS_M\subset\cS_0,
\qquad
\cA_\infty\subset\cA_M\subset\cA_0.
\]
The algebra $\cA_M$ is the largest subalgebra of $\cA_0$ to which the kernel product can be extended through the symbol correspondence.  More precisely, the product on $\cA_M$ is transported from pointwise multiplication on $\cF_M$ and agrees with the formal integral product whenever that integral product defines an element of $\cA_0$.  Hence the problem of extending the bounded kernel product to all of $\cA_0$ is reduced to the problem of understanding when $\cF_M=\cF_0$.  We do not give a complete characterization of this equality, but we prove several partial results and present examples showing that the inclusions above may be strict.

The paper is organized as follows.  In Section \ref{sec:translation invariant RKHS} we recall the direct integral representation of translation-invariant RKHSs.  In Section \ref{sec:translation-invariant operators} we review the bounded theory and introduce the spaces $\cS_0$, $\cA_0$ and $\cF_0$.  Section \ref{sec:topological algebras} contains the construction of $\cF_M$ and the proof of its main algebraic and topological properties.  In Section \ref{sec:algebras A S} we transfer these results to the operator and kernel realizations.  Section \ref{sec:examples} discusses examples coming from the upper half-plane, the Bergman space on the unit disk and the Fock space.  Finally, Section \ref{sec:questions} collects some open questions.

Most existing work in this direction concerns bounded operator algebras, especially Banach algebras, $C^*$-algebras and von Neumann algebras.  There is a substantial literature on unbounded operators on Bergman, Fock and Hardy spaces; just to mention a few works, see for example, \cite{HanWangWu2022,Haslinger2024,ArreortuaLoaizaMacias2026,Sarason2008Toeplitz}.  However, little seems to have been said about commutative algebras generated by unbounded operators on RKHSs such as Bergman and Fock spaces.  The novelty of the present work is to show that, in the translation invariant setting, these algebras admit an explicit symbol model and a largest natural commutative extension of the bounded theory.

\section{Translation-invariant RKHS}
\label{sec:translation invariant RKHS}

\subsection{Preliminaries}
\label{sec:preliminaries}

We recall some facts from the previous works \cite{HerreraYanezMaximenkoRamosVazquez2022,BaisMaximenkoVenkuNaidu2025}.
Let $G$ be a locally compact abelian group with Haar measure $\nu$ and $Y$ a measure space with measure $\lambda$. We assume that $G$ is metrizable and $\sigma$-compact, that $Y$ is $\sigma$-finite and that the spaces $L^2(G,d\nu)$ and $L^2(Y,d\lambda)$ are separable.

We consider a Hilbert space $H$ of functions on $G\times Y$ embedded in the space $L^2(G\times Y)$ and invariant under translations of $G$.
We will denote the unitary representation of $G$ on $H$ by
\[
(\rho(p)f)(x,y)=f(x-p,y),\qquad x\in G,\ y\in Y,\ p\in G.
\]

Throughout the paper, inner products on complex Hilbert spaces are taken to be linear in the first variable and conjugate-linear in the second one. Thus the reproducing property is written as
\[
f(x,y)=\langle f,K_{x,y}\rangle_H,\qquad x\in G,\ y\in Y.
\]

Let $(K_{x,y})_{(x,y)\in G\times Y}$ denote the reproducing kernel of $H$. One can check that the $G$-invariance of $H$ is equivalent to the condition
\begin{equation}
  K_{x,y}(u,v)=K_{0,y}(u-x,v),\qquad \forall \ (x,y),(u,v)\in G\times Y.  
\end{equation}
In order for some integrals to be well-defined, we will further suppose that
\begin{equation}
\label{eq:sup condition Kxy}
\sup_{v\in Y}\int_G |K_{0,y}(u,v)|d\nu (u) < +\infty,\qquad \forall y\in Y.
\end{equation}

\subsection{Fourier transforms and direct integral representation}

Let $(\widehat{G},d\widehat{\nu})$ be the dual group of $G$, where  $\widehat{\nu}$ is its Haar measure, normalized so that the Fourier transform from $L^2(G,\nu)$ to $L^2(\widehat{G},d\widehat{\nu})$ is unitary.

We denote by $L_{\bullet,y}(v)$ the Fourier transform of $K_{0,y}(\cdot,v)$, where $y,v\in Y$ are fixed and $\bullet$ serves as a placeholder for the variable in $\widehat{G}$. That is:
\[
L_{\xi,y}(v)=\int_G K_{0,y}(u,v)\overline{\xi(u)}d\nu(u),\qquad \xi\in\widehat{G},\ y,v\in Y.
\]
Let $\Omega$ be the space of ``non-trivial frequencies'' in the dual space $\widehat{G}$:
\[
\Omega = \{\xi \in \widehat{G} \colon\  \exists y\in Y\ L_{\xi,y}(y)\neq 0\}.
\]
We will further assume that there is a measurable family of functions $(q_\xi)_{\xi\in\Omega}$ such that
\begin{equation}
\label{eq:assumption L=qq} 
L_{\xi,y}(v)=\overline{q_\xi(y)}q_\xi(v),\qquad \xi\in\Omega,\ v,y\in Y.
\end{equation}
Moreover, for every $\xi\in\Omega$ we denote by $H_\xi$ the one-dimensional space $\bC q_\xi$.

\medskip
\noindent\textbf{Standing assumptions.}
From this point on, unless explicitly stated otherwise, we assume that $G$ is a metrizable $\sigma$-compact locally compact abelian group, $(Y,\lambda)$ is $\sigma$-finite, the spaces $L^2(G,d\nu)$ and $L^2(Y,d\lambda)$ are separable, $H\subset L^2(G\times Y,\nu\otimes\lambda)$ is a translation-invariant RKHS satisfying \eqref{eq:sup condition Kxy}, and the one-dimensional fiber condition \eqref{eq:assumption L=qq} holds.
\medskip

The following easy consequence will be useful later on. As before, given $y\in Y$, we denote by $q_\bullet(y)$ the function $\xi\mapsto q_\xi(y)$.
\begin{cor}
\label{cor:|q| continuous in xi}
    For every fixed $y\in Y$, the function $|q_\bullet(y)|$ is continuous.
\end{cor}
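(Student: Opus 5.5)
Setting $v=y$ in \eqref{eq:assumption L=qq} gives, for every $\xi\in\Omega$ and every $y\in Y$,
\[
|q_\xi(y)|^2=\overline{q_\xi(y)}\,q_\xi(y)=L_{\xi,y}(y)=\int_G K_{0,y}(u,y)\,\overline{\xi(u)}\,d\nu(u).
\]
So $|q_\bullet(y)|=\sqrt{L_{\bullet,y}(y)}$ on $\Omega$. Since the square root is continuous on $[0,\infty)$, the corollary will follow once we show that $\xi\mapsto L_{\xi,y}(y)$ is continuous. The displayed identity also shows that $L_{\xi,y}(y)$ is real and nonnegative, so the square root makes sense.

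For continuity, fix $y\in Y$ and set $f(u)=K_{0,y}(u,y)$. Assumption \eqref{eq:sup condition Kxy}, with $v=y$, gives $\int_G|f|\,d\nu<\infty$, so $f\in L^1(G,\nu)$. Then $\xi\mapsto L_{\xi,y}(y)$ is exactly the Fourier transform $\widehat f(\xi)=\int_G f(u)\overline{\xi(u)}\,d\nu(u)$ of an $L^1$ function. It is a standard fact for locally compact abelian groups that $\widehat f$ is continuous on $\widehat G$, in fact uniformly continuous, vanishing at infinity. Hence $\xi\mapsto L_{\xi,y}(y)$ is continuous on all of $\widehat G$, and in particular on $\Omega$. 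To prove this fact directly, one takes a net $\xi_\alpha\to\xi$ in the compact-open topology and uses dominated convergence on compact sets together with the $L^1$ tail bound outside a large compact set.

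The only point that needs care is the domain of $q_\bullet(y)$, since $q_\xi$ is defined only for $\xi\in\Omega$. If one wants a function on all of $\widehat G$, the natural choice is $q_\xi:=0$ for $\xi\notin\Omega$. Then we need $L_{\xi,y}(y)=0$ off $\Omega$, and this is immediate from the definition of $\Omega$: $\xi\notin\Omega$ means $L_{\xi,y}(y)=0$ for all $y$. So the identity $|q_\xi(y)|^2=\widehat f(\xi)$ holds on the whole of $\widehat G$, and $|q_\bullet(y)|$ is continuous on $\widehat G$.

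I expect no real obstacle. The main point is to notice that the diagonal value $v=y$ of the factorization \eqref{eq:assumption L=qq} turns $|q_\xi(y)|^2$ into an $L^1$ Fourier transform, with \eqref{eq:sup condition Kxy} supplying the integrability.
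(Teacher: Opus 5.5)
Your proof is correct and follows the paper's argument. The diagonal case $v=y$ of the factorization gives $|q_\xi(y)|^2=L_{\xi,y}(y)$, which is the Fourier transform of the $L^1$ function $u\mapsto K_{0,y}(u,y)$ and therefore lies in $C_0(\widehat G)$; your extra remarks (integrability comes from the supremum condition with $v=y$, and setting $q_\xi=0$ off $\Omega$ is consistent, so the continuity holds on all of $\widehat G$) spell out details the paper leaves implicit.
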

\begin{proof}
    As is well-known, the Fourier transform maps $L^1$ functions into $C_0(\widehat{G})$, which implies that $L_{\xi,y}(y)$ is continuous.
    By the assumption \eqref{eq:assumption L=qq}, we have $L_{\xi,y}(y)=|q_\xi(y)|^2$, whence the conclusion.
\end{proof}

Consider now the operator $R=N\mathbf{F}$, where
\[
\mathbf F=F\otimes I\colon L^2(G\times Y)\to L^2(\widehat{G}\times Y)
\]
is the Fourier-Plancherel transform on the first component and
\[
N\colon L^2(\widehat{G}\times Y)\to 
L^2(\Omega,\widehat{\nu})
=
\int_{\Omega}^\oplus
H_\xi d\widehat{\nu}(\xi)
\]
acts by
\[
(Ng)(\xi)=\langle g(\xi,\cdot),q_\xi\rangle_{L^2(Y)},\qquad\xi\in \Omega.
\]
Herrera-Yanez, Maximenko and Ramos-Vazquez showed that $R|_H$ is a unitary map. For simplicity, we will simply write $R$ instead of $R|_H$. Note that the direct integral decomposition of $H$ is one-dimensional.

The inverse of $R$ is the operator $R^*=\mathbf{F}^*N^*$ given by
\begin{equation}
\label{eq:def Rstar}
R^* f(x,v)
=
\int_{\Omega}f(\xi) q_\xi(v) \xi(x)  d\widehat{\nu}(\xi),\qquad x\in G,\ v\in Y. 
\end{equation}

Under this map, translations become unitary multiplication operators:
\begin{equation}
    \label{eq:Rrho x R* equals Ex}
    R\rho(x)R^*=M_{E_{-x}},
\end{equation}
where $E_x$ is the function given by
\[
E_x(\xi)=\xi(x),\qquad \xi\in \Omega.
\]

We recall the following useful fact.
\begin{prop}[{\cite[Proposition 3.3]{BaisMaximenkoVenkuNaidu2025}}]
    \label{prop:RK=q}
    Let $x\in G$ and $y\in Y$. Then
    \[
    RK_{x,y}=E_{-x}\overline{q_\bullet(y)}.
    \]
\end{prop}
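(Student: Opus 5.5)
The plan is to identify $RK_{x,y}$ by computing its inner product against an arbitrary element $f\in L^2(\Omega,\widehat{\nu})$ and using that $R$ is unitary. Since $R\colon H\to L^2(\Omega,\widehat\nu)$ is unitary with inverse $R^*$ given by \eqref{eq:def Rstar}, we have for every $f\in L^2(\Omega)$
\[
\langle f, RK_{x,y}\rangle_{L^2(\Omega)}=\langle R^*f, K_{x,y}\rangle_H=(R^*f)(x,y),
\]
by the reproducing property. The right-hand side is, by \eqref{eq:def Rstar},
\[
(R^*f)(x,y)=\int_\Omega f(\xi)\,q_\xi(y)\,\xi(x)\,d\widehat\nu(\xi)
=\bigl\langle f,\ \overline{\xi(x)}\,\overline{q_\xi(y)}\bigr\rangle_{L^2(\Omega)}
=\bigl\langle f,\ E_{-x}\overline{q_\bullet(y)}\bigr\rangle_{L^2(\Omega)},
\]
using $\overline{\xi(x)}=\xi(-x)=E_{-x}(\xi)$. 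Since $f$ is arbitrary, this gives $RK_{x,y}=E_{-x}\overline{q_\bullet(y)}$.

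The one genuine issue is justifying that the pointwise formula \eqref{eq:def Rstar} holds at the specific point $(x,y)$, i.e.\ that the integral converges and that $E_{-x}\overline{q_\bullet(y)}$ actually lies in $L^2(\Omega)$. I would settle this first by showing $\int_\Omega |q_\xi(y)|^2\,d\widehat\nu(\xi)<\infty$. By \eqref{eq:assumption L=qq}, $|q_\xi(y)|^2=L_{\xi,y}(y)$, the Fourier transform of $K_{0,y}(\cdot,y)$. Alternatively, compute directly: $\mathbf F K_{x,y}(\xi,v)=\overline{\xi(x)}L_{\xi,y}(v)$ by translation invariance $K_{x,y}(u,v)=K_{0,y}(u-x,v)$ and the shift rule for the Fourier transform. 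Here the integrability assumption \eqref{eq:sup condition Kxy} makes the $L^1$ Fourier transform agree with the Plancherel one. Then apply $N$:
\[
(RK_{x,y})(\xi)=\overline{\xi(x)}\langle \overline{q_\xi(y)}q_\xi,q_\xi\rangle_{L^2(Y)}=E_{-x}(\xi)\overline{q_\xi(y)}\,\|q_\xi\|^2_{L^2(Y)}.
\]

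This direct route reduces everything to checking the normalization $\|q_\xi\|_{L^2(Y)}=1$ for almost every $\xi\in\Omega$, which is the main obstacle. I would obtain it from the idempotence of the reproducing kernel: $\langle K_{0,v},K_{0,y}\rangle=K_{0,v}(0,y)$. Passing to the Fourier side fiberwise, this yields $\int_Y L_{\xi,y}(w)\overline{L_{\xi,v}(w)}\,d\lambda(w)=L_{\xi,y}(v)$, i.e.\ $\|q_\xi\|^2\,\overline{q_\xi(y)}q_\xi(v)=\overline{q_\xi(y)}q_\xi(v)$, which forces $\|q_\xi\|=1$ on $\Omega$, where some $q_\xi(y)\neq0$. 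Alternatively, the normalization is implicit in the unitarity of $R$ established by Herrera-Yanez, Maximenko and Ramos-Vazquez, and I would simply cite it. With that, both routes give the claimed identity.
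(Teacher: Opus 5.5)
Your proposal is correct. The paper has no proof of its own here; it imports the statement from \cite[Proposition 3.3]{BaisMaximenkoVenkuNaidu2025}. Your second route is the natural self-contained argument:
\begin{itemize}
\item the shift rule gives $\mathbf F K_{x,y}(\xi,v)=\overline{\xi(x)}L_{\xi,y}(v)$, since for a.e.\ $v$ the fiber $K_{0,y}(\cdot,v)$ lies in $L^1\cap L^2(G)$, so the $L^1$ and Plancherel transforms agree;
\item applying $N$ yields $E_{-x}(\xi)\overline{q_\xi(y)}\,\|q_\xi\|^2_{L^2(Y)}$;
\item the factor $\|q_\xi\|^2$ drops out because $L_\xi$ reproduces $H_\xi=\bC q_\xi\subset L^2(Y)$.
\end{itemize}

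Keep this route as the proof and treat the first one only as a consistency check. The first route needs \eqref{eq:def Rstar} at \emph{every} point $(x,y)$, not merely $\nu\otimes\lambda$-a.e. For $y$ in a $\lambda$-null set, the $L^2$ identity $R^*=\mathbf F^*N^*$ says nothing about $(R^*f)(x,y)$. There, validity of \eqref{eq:def Rstar} for all $f$ is literally equivalent to $RK_{x,y}=E_{-x}\overline{q_\bullet(y)}$. Proving $\int_\Omega|q_\xi(y)|^2\,d\widehat\nu<\infty$ beforehand only settles convergence of the integral, not this point. In the direct route you don't need it at all: it follows afterwards from $\|q_\bullet(y)\|^2=\|RK_{0,y}\|^2=K_{0,y}(0,y)$.

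Two details in the normalization step need tightening.

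First, idempotence at the single point $0$ only gives an integrated identity. You need all translates, $\langle K_{0,y},K_{u,v}\rangle=K_{0,y}(u,v)$ for $u\in G$. The passage to the Fourier side is best done by Plancherel on $L^2(G\times Y)$ rather than a fiberwise Fubini argument, because \eqref{eq:sup condition Kxy} gives no integrability in the $Y$-variable. Concretely:
\begin{itemize}
\item the left side equals $\int_{\widehat G}\xi(u)\Phi(\xi)\,d\widehat\nu(\xi)$, where $\Phi(\xi)=\int_Y L_{\xi,y}(w)\overline{L_{\xi,v}(w)}\,d\lambda(w)$ lies in $L^1(\widehat G)$ by Cauchy--Schwarz;
\item the right side is an $L^1(G)$ function of $u$ with Fourier transform $L_{\bullet,y}(v)$;
\item uniqueness of Fourier transforms then gives $\Phi=L_{\bullet,y}(v)$ a.e.
\end{itemize}

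Second, taking $v=y$ yields $|q_\xi(y)|^2(\|q_\xi\|^2-1)=0$ for a.e.\ $\xi$, with a null set depending on $y$. This fixed-$y$ statement is exactly what you need to replace $\overline{q_\xi(y)}\|q_\xi\|^2$ by $\overline{q_\xi(y)}$. Your stronger claim that $\|q_\xi\|=1$ for a.e.\ $\xi\in\Omega$ does not follow directly, since $Y$ may be uncountable. It requires the countable partition of Proposition~\ref{prop:partition Y0}.
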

We also recall the following useful partition of $\Omega$:
\begin{prop}[{\cite[Proposition 3.4]{BaisMaximenkoVenkuNaidu2025}}]
\label{prop:partition Y0}
    There exists a finite or countable subset $Y_0$ of $Y$ and a measurable partition $(\Omega_y)_{y\in Y_0}$ such that for every $\xi$ in $\Omega_y$, $L_{\xi,y}(y)>0$.
\end{prop}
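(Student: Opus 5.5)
The plan is to build the partition greedily from an exhaustion of $\Omega$ by open sets on which a single $y$ works. By definition of $\Omega$, for every $\xi\in\Omega$ there is some $y\in Y$ with $L_{\xi,y}(y)\neq 0$. By \eqref{eq:assumption L=qq} this value equals $|q_\xi(y)|^2\ge 0$, so in fact $L_{\xi,y}(y)>0$. For each $y\in Y$ put
\[
U_y=\{\xi\in\Omega\colon L_{\xi,y}(y)>0\}.
\]
By Corollary~\ref{cor:|q| continuous in xi}, the map $\xi\mapsto L_{\xi,y}(y)=|q_\xi(y)|^2$ is continuous. Indeed, it is the Fourier transform of the $L^1$ function $K_{0,y}(\cdot,y)$, which is integrable by \eqref{eq:sup condition Kxy}, so it is defined and continuous on all of $\widehat G$. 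Hence $U_y$ is open in $\widehat G$, $U_y\subset\Omega$, and $\Omega=\bigcup_{y\in Y}U_y$. In particular $\Omega$ is open.

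The key step is to pass from this possibly uncountable open cover to a countable subcover. Since $G$ is metrizable and $\sigma$-compact and $L^2(G)$ is separable, $\widehat G$ is second countable. (Alternatively: $G$ $\sigma$-compact gives $\widehat G$ metrizable, and $G$ metrizable gives $\widehat G$ $\sigma$-compact, so $\widehat G$ is a $\sigma$-compact metrizable space and hence second countable.) Therefore the open subspace $\Omega$ is Lindelöf, and there is a finite or countable set $Y_0=\{y_1,y_2,\dots\}\subset Y$ with $\Omega=\bigcup_{k}U_{y_k}$. I expect this step to be the main obstacle. One has to justify second countability of $\widehat G$ from the standing hypotheses via Pontryagin duality facts. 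If that is awkward, I would use a more hands-on argument: write $\widehat G=\bigcup_n C_n$ with $C_n$ compact, cover each compact $C_n\cap\overline{V}$ for a suitable exhaustion of $\Omega$ by compacts, and extract finite subcovers. This also requires $\Omega$ to be $\sigma$-compact, which holds as an open subset of a $\sigma$-compact metrizable space.

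Finally, disjointify the cover in the usual way. Set
\[
\Omega_{y_1}=U_{y_1},\qquad \Omega_{y_k}=U_{y_k}\setminus\bigcup_{j<k}U_{y_j}\quad(k\ge 2).
\]
Each $\Omega_{y_k}$ is Borel, being a difference of open sets. The sets are pairwise disjoint, their union is $\Omega$, and every $\xi\in\Omega_{y_k}\subset U_{y_k}$ satisfies $L_{\xi,y_k}(y_k)>0$. If the same point of $Y$ were selected twice it can simply be listed once, and empty pieces may be discarded. This gives the required finite or countable $Y_0$ and measurable partition $(\Omega_y)_{y\in Y_0}$.
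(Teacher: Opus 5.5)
Your proof is correct. The three steps are: openness of $U_y$, a countable subcover from second countability (hence the Lindelöf property) of $\widehat G$, and disjointification. For the first step, note that $L_{\xi,y}(y)=0$ for $\xi\notin\Omega$ by the definition of $\Omega$, so $U_y=\{\xi\in\widehat G\colon L_{\xi,y}(y)\neq0\}$ is open as a preimage under the continuous Fourier transform of $K_{0,y}(\cdot,y)\in L^1(G)$. The paper does not reprove this statement but imports it from Bais, Maximenko and Venku Naidu. Its later use of the open sets $U_y$ and its corollary on continuity of $|q_\bullet(y)|$ fit this standard argument.
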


Throughout the paper we will make use of the following notation.
For $y\in Y$, we consider the open set
\begin{equation}
    \label{eq:definition Uy}
    U_y\coloneqq
\big\{\xi\in\Omega\colon q_\xi(y)\neq 0\big\}
=
\big\{\xi\in\Omega\colon L_{\xi,y}(y)>0\big\},
\end{equation}
and define $\mu_y$ as the measure given by
\begin{equation}
    \label{eq:definition muy}
\mu_y(E)=\int_{E}|q_\xi(y)|^2d\widehat{\nu}(\xi),\qquad E\subset \Omega\text{ measurable}.
\end{equation}
Note that $\mu_y$ is a finite measure concentrated in $U_y$, since
\begin{align*}
\mu_y(\Omega)&=\int_\Omega |q_\xi(y)|^2d\widehat{\nu}(\xi)
=
\int_{U_y}|q_\xi(y)|^2d\widehat{\nu}(\xi)\\
&=
\|q_\bullet(y)\|^2_{L^2(\Omega,\widehat{\nu})}
=
\|K_{0,y}\|_H^2=K_{0,y}(0,y)<\infty.    
\end{align*}

\subsection{\texorpdfstring{On unbounded operators defined on $H$}{On unbounded operators defined on H}}

Denote by $\cB(H)$ the space of bounded operators on $H$. The commutant $\cC(\rho)$ of the representation $\rho$ is defined by
\[
\cC(\rho)=\{T\in \cB(H)\colon \rho(x)T=T\rho(x),\ \forall x\in G\}.
\]
Such operators will be called \emph{translation invariant}.
In this work we will systematically deal with unbounded operators. As usual, we will say ``unbounded'' to refer to possibly unbounded.

We will mainly consider unbounded operators defined on the domain
\begin{equation}
    \label{eq:def domain D0}
    \cD_0=\operatorname{span}\{K_{x,y}\colon x\in G,\ y\in Y\}.
\end{equation}
It is clearly dense in $H$ and invariant under the translations $\rho(x)$, $x\in G$.
Note that, by Proposition \ref{prop:RK=q}, we have
\begin{equation}
    \label{eq:def domain RD0}
    R(\cD_0)=\operatorname{span}\{E_{-x}\overline{q_\bullet(y)}\colon x\in G,\ y\in Y\}\subset L^2(\Omega,\widehat{\nu}).
\end{equation}

Recall that an operator $T\colon \operatorname{Dom}(T)\to H$ is called closed if its graph
\[
\operatorname{Graph}(T)=\{(f,Tf)\colon f\in\operatorname{Dom}(T)\}
\]
is \emph{closed} in $H\oplus H$ and \emph{closable} if the closure of its graph corresponds to the graph of an operator.
The unitarity of $R$ implies that an operator $T\colon \operatorname{Dom}(T)\to H$ is closable if and only if $RTR^*\colon R(\operatorname{Dom}(T))\to R(H)$ is closable replacing $H$ by $R(H)=L^2(\Omega,\widehat{\nu})$ in the above definition.

Moreover, we will systematically make use of multiplication operators.
If \(b\colon \Omega\to\mathbb C\) is measurable and finite almost everywhere, 
we write $M_b$ for the multiplication operator on $R(\cD_0)$:
\[
M_bf=bf,\qquad f\in R(\cD_0).
\]
Note that $M_b\subset M_b^{\text{max}}$, where $M_b^{\text{max}}$ denotes the
maximal multiplication operator on 
\(L^2(\Omega,\widehat\nu)\), with domain
\[
\operatorname{Dom}(M_b^{\text{max}})=\{f\in L^2(\Omega,\widehat\nu): bf\in L^2(\Omega,\widehat\nu)\}.
\]

\section{Operator, kernel and symbol correspondence}
\label{sec:translation-invariant operators}

\subsection{Translation-invariant operators and their kernels}

Every bounded operator $T$ on $H$ can be written as an integral operator. In \cite[Proposition 2.2]{BaisMaximenkoVenkuNaidu2025}, the authors showed specifically that
\[
(Tf)(x,y)=\int_{G\times Y}K_T(x,y,u,v)f(u,v) d\nu(u)d\lambda(v),
\]
where the function $K_T$ is defined by
\[
K_T(x,y,u,v)=(TK_{u,v})(x,y).
\]
Moreover, they proved that if $T$ is translation invariant, then
\[
K_T(x,y,u,v)=K_T(x-u,y,0,v)
\]
and that the function 
\[
G\times Y\times Y\ni (x,y,v)\mapsto K_T(x,y,0,v)\in \bC,
\]
belongs to the class $\cA_0$ of functions
$\psi\colon G\times Y\times Y\to \bC$ such that
\begin{equation}
\label{eq:A0 condition 1}
    \psi(\cdot,\cdot,v)\in H,\qquad v\in Y,
\end{equation}
\begin{equation}
\label{eq:A0 condition 2}
    \overline{\psi(-\cdot,y,\cdot)}\in H,\qquad y\in Y.
\end{equation}
This space is endowed with an involution
\begin{equation}
    \label{eq:def involution A0}
    \phantom{\psi}^\dagger\colon \cA_0\longrightarrow \cA_0
\end{equation}
given by
\[
\psi^\dagger(x,y,v)=\overline{\psi(-x,v,y)},\qquad x\in G,\ y,v\in Y.
\]

Given $\psi\in\mathcal A_0$, we define the formal integral operator $\widetilde{S}_\psi$ by
\begin{equation}
\label{eq:def S tilde psi}  
\widetilde{S}_\psi f(x,y)
=
\int_{G\times Y} f(u,v)\psi(x-u,y,v) d\nu (u)d\lambda(v),
\end{equation}
Note that, by \eqref{eq:A0 condition 2}, we have $\psi(\cdot,y,\cdot)\in L^2(G\times Y)$, for every $y\in Y$. Hence, by the invariance of $\nu$ and Hölder's inequality, we have
\[
\int_{G\times Y}|f(u,v)||\psi(x-u,y,v)| d\nu(u)d\lambda(v)
<\infty.
\]
So in fact the integral \eqref{eq:def S tilde psi} exists for every $f\in H$, though the result may of course fail to be in $H$.

As was shown in \cite[Lemma 5.3]{BaisMaximenkoVenkuNaidu2025}, such an operator $\widetilde{S}_\psi$ is well-defined on the kernel functions $K_{p,q}$, $(p,q)\in G\times Y$, and one has
\begin{equation}
    \label{eq:tildeSK is psi}
    (\widetilde{S}_\psi K_{p,q})(x,y)
=
\psi(x-p,y,q),\qquad x\in G,\ y\in Y,
\end{equation}
which shows that the application $\psi\mapsto \widetilde{S}_\psi$ is injective.

Since $\widetilde S_\psi$ is linear and is defined pointwise on all of $H$, its restriction to the span of the kernel functions is well-defined. Thus we define
\[
S_\psi=\widetilde S_\psi|_{\cD_0}.
\]
Furthermore, we define the space
\begin{equation}
    \label{eq:definition S0}
    \cS_0=\{S_\psi\colon \psi\in \cA_0\}.
\end{equation}
One can easily check that for $\alpha,\beta\in\bC$ and $\psi_1,\psi_2\in\cA_0$ it holds
\[
S_{\alpha\psi_1+\beta\psi_2}=\alpha S_{\psi_1}+\beta S_{\psi_2}.
\]
Furthermore, denote by $\mathcal A_\infty$ the set of all $\psi$ in $\mathcal A_0$ such that the operator $S_\psi$ has a bounded extension. 
When $S_\psi$ has a bounded extension to $H$, we identify $S_\psi$ with that unique bounded extension.
We remark that this is a different convention from the one used in \cite{BaisMaximenkoVenkuNaidu2025}, but we choose this one for simplicity. In analogy to $\cA_\infty$, we set
\[
\cS_\infty=\{S_\psi\colon \psi\in \cA_\infty\}.
\]
Moreover, we set $\|\psi\|_{\cA_\infty}\coloneqq \|S_\psi\|_{op}$.

Furthermore, there is a convolution-type product that one can define on the elements of $\cA_0$.
For $\varphi,\psi\in\cA_0$ we define the product
\begin{equation}
\label{eq:def odot product}
(\varphi\odot\psi)(x,y,v)=
\int_{G\times Y}
\varphi(x-s,y,t)\psi(s,t,v)d\nu(s)d\lambda(t).
\end{equation}
For each fixed $(x,y,v)$, the integral is finite by the Cauchy--Schwarz inequality, because the two factors are functions of $(s,t)\in G\times Y$ belonging to $L^2(G\times Y)$.  The resulting function need not belong to $\cA_0$, so at this stage \eqref{eq:def odot product} should be regarded as a formal product on $\cA_0$.

As it turns out, the operator composition in $\cS_\infty$ translates as the product $\odot$ in $\cA_\infty$. Indeed, as was shown in \cite[Lemma 7.6]{BaisMaximenkoVenkuNaidu2025}, if $\varphi,\psi\in\cA_\infty$ one has
\[
S_{\varphi}S_{\psi}=S_{\varphi\odot\psi}.
\]

\begin{prop}
    \label{prop:tilde S adjoint closable}
    Let $\psi\in\cA_0$. Then
    $S_{\psi^\dagger}$ is a formal adjoint for $S_\psi$ in the sense that
    \[
    \langle S_{\psi}f,g\rangle=
    \langle f,S_{\psi^\dagger}g\rangle,
    \]
    for all $f,g\in\cD_0$.
    In particular, \(S_\psi\) is closable and $S_{\psi^\dagger}\subset S_\psi^*$.
\end{prop}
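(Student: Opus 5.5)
By sesquilinearity of both sides in $(f,g)$ and linearity of $S_\psi$, $S_{\psi^\dagger}$ on $\cD_0$, it suffices to check the identity for $f=K_{p,q}$ and $g=K_{a,b}$ with $(p,q),(a,b)\in G\times Y$. I first note that $\psi^\dagger\in\cA_0$. Condition \eqref{eq:A0 condition 1} for $\psi^\dagger$ reads $\overline{\psi(-\cdot,v,\cdot)}\in H$ for each $v$, which is exactly \eqref{eq:A0 condition 2} for $\psi$. Conversely, \eqref{eq:A0 condition 2} for $\psi^\dagger$ reads $\psi(\cdot,\cdot,y)\in H$, which is \eqref{eq:A0 condition 1} for $\psi$. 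Hence $S_{\psi^\dagger}$ is defined on $\cD_0$.

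Next I use \eqref{eq:tildeSK is psi}. It gives $S_\psi K_{p,q}=\psi(\cdot-p,\cdot,q)$, and this function lies in $H$ because \eqref{eq:A0 condition 1} holds and $H$ is translation invariant. The reproducing property then yields
\[
\langle S_\psi K_{p,q},K_{a,b}\rangle=(S_\psi K_{p,q})(a,b)=\psi(a-p,b,q).
\]
In the same way, $S_{\psi^\dagger}K_{a,b}=\psi^\dagger(\cdot-a,\cdot,b)\in H$, so
\[
\langle K_{p,q},S_{\psi^\dagger}K_{a,b}\rangle=\overline{\langle S_{\psi^\dagger}K_{a,b},K_{p,q}\rangle}=\overline{\psi^\dagger(p-a,q,b)}=\psi(a-p,b,q),
\]
where the last equality is the definition of $\dagger$. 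The two sides agree, which proves the formal adjoint identity.

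For the consequences, the identity says precisely that every $g\in\cD_0$ lies in $\operatorname{Dom}(S_\psi^*)$ with $S_\psi^*g=S_{\psi^\dagger}g$. This is the inclusion $S_{\psi^\dagger}\subset S_\psi^*$. Since $\cD_0$ is dense, $S_\psi^*$ is densely defined, and a densely defined operator whose adjoint is densely defined is closable by the standard criterion. So $S_\psi$ is closable.

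The only delicate point is the justification that $S_\psi K_{p,q}$ actually belongs to $H$, so that the reproducing property may be applied to it. This follows from \eqref{eq:A0 condition 1} together with the translation invariance of $H$, and the rest is bookkeeping.
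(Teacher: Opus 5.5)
Your proof is correct and follows essentially the same route as the paper: reduce to kernel functions, evaluate both sides with \eqref{eq:tildeSK is psi} and the reproducing property, and match them using the definition of $\psi^\dagger$. You also spell out details the paper leaves implicit: that $\psi^\dagger\in\cA_0$, that $S_\psi K_{p,q}\in H$ by translation invariance, and the standard criterion that a densely defined operator with densely defined adjoint is closable.
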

\begin{proof}
Let $(p,q),(x,y)\in G\times Y$. By \eqref{eq:tildeSK is psi} and the reproducing property we have
\[
\langle
S_\psi K_{p,q},K_{x,y}
\rangle
=S_\psi K_{p,q}(x,y)
=\psi(x-p,y,q).
\]
On the other hand, by
\eqref{eq:A0 condition 2}, \eqref{eq:def involution A0} and \eqref{eq:def S tilde psi}, we have
\[
    \psi(x-p,y,q)
    =
    \overline{
    \psi^\dagger(p-x,q,y)
    }
    =
    \overline{S_{\psi^\dagger} K_{x,y}(p,q)}
    =
    \overline{
    \langle
    S_{\psi^\dagger} K_{x,y},K_{p,q}
    \rangle
    }.
\]
Therefore,
\[
\langle
S_\psi K_{p,q},K_{x,y}
\rangle=
    \langle
    K_{p,q},S_{\psi^\dagger} K_{x,y}
    \rangle,
\]
and the proposition follows.
\end{proof}

\subsection{Diagonalization of unbounded translation-invariant operators}

In \cite{BaisMaximenkoVenkuNaidu2025} the authors proved that $\cS_\infty=\cC(\rho)$ and that every operator in $\cS_\infty$ is isomorphic via $R$ to a bounded multiplication operator acting on $L^2(\Omega)$. 

We now show that this diagonalization extends to the closable translation-invariant unbounded operators. The argument is rather standard, but we include a proof for completeness.

If $S:\cD_0\to H$ is an operator defined on the common domain $\cD_0$, we say that $S$ is \emph{translation invariant} if
\[
S\rho(x)f=\rho(x)Sf,\qquad f\in\cD_0,\ x\in G.
\]
Since $\cD_0$ is invariant under the translations, both sides are well-defined.
It follows easily from \eqref{eq:tildeSK is psi} that every element of $\cS_0$ is translation invariant.

For a subspace $X\subset L^\infty(\Omega)$, denote by $\mathcal M(X)$ the space
of bounded multiplication operators with symbols in $X$:
\[
\mathcal M(X)=\{M_a\colon a\in X\}.
\]
In particular, $\mathcal M(L^\infty(\Omega))$ is the multiplication algebra on
$L^2(\Omega,\widehat{\nu})$.
We are particularly interested in the case where $X$ is the space
\[
\mathcal E=\operatorname{span}\{E_x\colon x\in G\}\subset L^\infty(\Omega).
\]
\begin{prop}
\label{prop:WOT SOT closure ME is MLinfty}
It holds
    \[
\overline{\mathcal M(\mathcal E)}^{\mathrm{WOT}}
=
\overline{\mathcal M(\mathcal E)}^{\mathrm{SOT}}
=
\mathcal M(L^\infty(\Omega)),
\]
where the closures are taken with respect to the weak and strong operator topologies, respectively.
\end{prop}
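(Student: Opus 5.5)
We must show that the WOT and SOT closures of $\mathcal M(\mathcal E)$ both equal $\mathcal M(L^\infty(\Omega))$. Since the SOT closure is contained in the WOT closure, it suffices to prove two inclusions: $\overline{\mathcal M(\mathcal E)}^{\mathrm{WOT}}\subset \mathcal M(L^\infty(\Omega))$ and $\mathcal M(L^\infty(\Omega))\subset\overline{\mathcal M(\mathcal E)}^{\mathrm{SOT}}$.

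\textbf{First inclusion.} Here $\mathcal M(L^\infty(\Omega))$ is the multiplication algebra on $L^2(\Omega,\widehat\nu)$. Since $\widehat\nu$ is $\sigma$-finite, this algebra is maximal abelian, hence equal to its own commutant. The commutant of any set of operators is WOT closed, so $\mathcal M(L^\infty(\Omega))$ is WOT closed. It obviously contains $\mathcal M(\mathcal E)$, because each $E_x$ is bounded with $|E_x|=1$. This gives the first inclusion.

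\textbf{Second inclusion.} I would use the von Neumann bicommutant theorem. The set $\mathcal M(\mathcal E)$ is a unital $*$-algebra: $E_0=1$, $E_xE_y=E_{x+y}$ and $\overline{E_x}=E_{-x}$. Hence its SOT closure equals its bicommutant $\mathcal M(\mathcal E)''$. It therefore suffices to show $\mathcal M(\mathcal E)'=\mathcal M(L^\infty(\Omega))$, since then $\mathcal M(\mathcal E)''=\mathcal M(L^\infty(\Omega))'=\mathcal M(L^\infty(\Omega))$.

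To compute the commutant, let $T$ commute with every $M_{E_x}$. Then $T$ commutes with $M_a$ for every $a$ in the norm closure of $\mathcal E$ in $L^\infty(\Omega)$. This closure contains the restrictions to $\Omega$ of the Fourier–Stieltjes transforms of finitely supported measures on $G$. By Stone–Weierstrass, the span of the characters $\xi\mapsto\xi(x)$ is dense in $C(K)$ for every compact $K\subset\widehat G$, since characters separate points of $\widehat G$ by Pontryagin duality.

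\textbf{Main obstacle.} The difficulty is passing from these uniform approximations to all of $L^\infty$. I would use the bounded Borel functional calculus, or equivalently a monotone-class argument. The set of bounded Borel $a$ with $M_a\in\{T\}'$ is closed under bounded pointwise a.e. limits, because such limits give SOT convergence of $M_a$ by dominated convergence. I would then show that this set contains $\chi_K$ for compact $K$ and hence all Borel sets. Obtaining $\chi_K$ requires care, because trigonometric polynomials are not uniformly dense on noncompact $\widehat G$.

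The approach I would try first:
\begin{enumerate}[label=(\roman*)]
\item Use $\sigma$-compactness to fix an exhausting sequence of compact sets.
\item Approximate $\chi_K\cdot g$ by uniformly bounded trigonometric polynomials converging pointwise on all of $\widehat G$. I would obtain these by multiplying Stone–Weierstrass approximants with a Bochner/Fejér-type approximate identity, i.e.\ positive-definite functions $\phi_n=\check h_n * \check h_n^*$ with $\phi_n\to1$ uniformly on compacts.
\end{enumerate}

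Alternatively, for $f\in L^2(\Omega)$ and any $b\in L^\infty$, I would show $\langle T M_b f,g\rangle=\langle M_bTf,g\rangle$ by approximating $b$ in $L^1(|f||g|\,d\widehat\nu)$ by elements of the closure of $\mathcal E$. This works if $\mathcal E$ is dense in $L^1(\mu)$ for every finite measure $\mu$. That density holds by Hahn–Banach: a function $h\in L^\infty(\mu)$ annihilating all characters gives a finite measure $h\,d\mu$ whose Fourier–Stieltjes transform vanishes, hence $h=0$ by uniqueness. This last route is the cleanest, and I would adopt it. It gives $T\in\mathcal M(L^\infty(\Omega))'=\mathcal M(L^\infty(\Omega))$, as required.
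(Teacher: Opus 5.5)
Your proof is correct, but it takes a different route from the paper's.

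\textbf{The paper's route.} It cites the $\sigma(L^\infty,L^1)$-density of $\mathcal E$ in $L^\infty(\Omega)$ from \cite{HerreraYanezMaximenkoRamosVazquez2022}. From this it gets directly that every $M_a$ is a WOT limit of some $M_{a_\alpha}$ with $a_\alpha\in\mathcal E$. It uses that $\mathcal M(L^\infty(\Omega))$ is WOT-closed for the reverse inclusion. It then invokes the fact that WOT and SOT closures of a convex set coincide.

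\textbf{Your route.} You use the bicommutant theorem, which applies since $\mathcal M(\mathcal E)$ is a unital $*$-algebra, and compute $\mathcal M(\mathcal E)'$. You prove the needed density yourself, via Hahn--Banach and the uniqueness theorem for Fourier--Stieltjes transforms.

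\textbf{Comparison.} Your proof is self-contained where the paper relies on a citation, but it carries an unnecessary layer. Your key lemma ($\mathcal E$ dense in $L^1(\mu)$ for every finite $\mu$) already gives WOT density directly. Given $f_1,\dots,f_n,g_1,\dots,g_n$, take $\mu=\sum_i|f_i||g_i|\,d\widehat\nu$; then $|\langle (M_e-M_a)f_i,g_i\rangle|\le\|e-a\|_{L^1(\mu)}$. So the commutant computation can be skipped. For $\mu\ll\widehat\nu$, your lemma is equivalent to the weak-$*$ density the paper cites. Both rest on injectivity of the Fourier transform.

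\textbf{Corrections.}
\begin{itemize}
\item In the commutant step, the weight $|f||g|$ is not the right one. You have $\langle TM_ef,g\rangle=\int e\,f\,\overline{T^*g}\,d\widehat\nu$ and $\langle M_eTf,g\rangle=\int e\,(Tf)\,\overline{g}\,d\widehat\nu$. So you should approximate $b$ in $L^1(\mu)$ with $\mu=(|f||T^*g|+|Tf||g|)\,d\widehat\nu$, which your lemma permits since this $\mu$ is finite.
\item The uniqueness theorem is for regular measures. Regularity of $h\,d\mu$ is automatic here: under the standing assumptions on $G$, the dual $\widehat G$ is $\sigma$-compact and metrizable, hence second countable.
\item The Stone--Weierstrass and monotone-class discussion can be dropped entirely.
\end{itemize}
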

\begin{proof}
Let $a\in L^\infty(\Omega)$.
Note that
$\mathcal E$ is $\sigma(L^\infty(\Omega),L^1(\Omega))$-dense in
$L^\infty(\Omega)$ (see, for example, \cite[Theorem 2.2]{HerreraYanezMaximenkoRamosVazquez2022}).

Hence, let $(a_\alpha)$ be a net in
$\mathcal E$ such that $a_\alpha\to a$ in
$\sigma(L^\infty(\Omega),L^1(\Omega))$. Then for every
$f,g\in L^2(\Omega,\widehat{\nu})$ we have
\[
\langle M_{a_\alpha}f,g\rangle
=
\int_{\Omega}a_\alpha(\xi)f(\xi)\overline{g(\xi)}\,d\widehat{\nu}(\xi)
\to
\int_{\Omega}a(\xi)f(\xi)\overline{g(\xi)}\,d\widehat{\nu}(\xi)
=
\langle M_af,g\rangle,
\]
because $f\overline g\in L^1(\Omega,\widehat{\nu})$. Hence
$M_{a_\alpha}\to M_a$ in the weak operator topology.

Since $\mathcal M(L^\infty(\Omega))$ is WOT-closed and
$\mathcal M(\mathcal E)$ is convex, it follows from \cite[Theorem 5.1.2]{KadisonRingroseI1983} the WOT and SOT closures of
$\mathcal M(\mathcal E)$ coincide.
\end{proof}

Next, recall that a closed densely defined operator $T$ is said to be \emph{affiliated} with a von Neumann algebra $\mathfrak A$ if for every unitary operator $U$ in its commutant $\mathfrak A'$ it holds $U\operatorname{Dom}(T)\subset\operatorname{Dom}(T)$ and
$UTU^*=T$, where the equality holds on $\operatorname{Dom}(T)$.

Notice that $\mathcal M(L^\infty(\Omega))$ is a maximal abelian von Neumann algebra with $\mathcal M(L^\infty(\Omega)) = \mathcal M(L^\infty(\Omega))'$. Further, every unitary operator in $\mathcal M(L^\infty(\Omega))$ is of the form $M_u$, for some $u\in L^\infty(\Omega)$ with $|u|=1$ almost everywhere.
We will make use of the following important result, adapted to our setting.
\begin{prop}[{\cite[Theorem 5.6.4]{KadisonRingroseI1983}}]
\label{prop:T affiliated is Mg}
    A closed densely defined operator $T$ is affiliated with the von Neumann algebra $\mathcal M(L^\infty(\Omega))$ if and only if $T=M_g$, for some measurable almost everywhere finite function $g\colon \Omega\to\bC$.
\end{prop}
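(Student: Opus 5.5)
We prove the two implications separately. Throughout, $M_g$ here denotes the maximal multiplication operator $M_g^{\max}$ on $L^2(\Omega,\widehat\nu)$, and we write $\mathfrak A=\mathcal M(L^\infty(\Omega))$. We use that $\mathfrak A'=\mathfrak A$ and that every unitary in $\mathfrak A'$ is $M_u$ with $|u|=1$ a.e.

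\emph{The easy direction.} Let $g$ be measurable and a.e.\ finite.
\begin{itemize}
\item The operator $M_g^{\max}$ is densely defined: $\operatorname{Dom}(M_g^{\max})$ contains every $f\,\mathbf 1_{\{|g|\le n\}}$ with $f\in L^2$, and these are dense by monotone convergence.
\item It is closed: if $f_n\to f$ and $gf_n\to h$ in $L^2$, pass to an a.e.\ convergent subsequence to get $h=gf$.
\item It is affiliated with $\mathfrak A$: for $|u|=1$ a.e.\ we have $|guf|=|gf|$, so $M_u$ preserves the domain, and $M_uM_gM_{\bar u}=M_g$ pointwise.
\end{itemize}

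\emph{The converse.} Let $T$ be closed, densely defined and affiliated with $\mathfrak A$. We use the bounded transforms of $T$.
\begin{enumerate}
\item By von Neumann's theorem, $T^*T$ is self-adjoint and positive. Hence $C=(I+T^*T)^{-1}$ is bounded with $0\le C\le I$, and $B=T(I+T^*T)^{-1/2}$ is a contraction defined on all of $L^2$.
\item Fix a unitary $U\in\mathfrak A'$. From $UTU^*=T$ (with $U\operatorname{Dom}(T)=\operatorname{Dom}(T)$, obtained by applying the hypothesis also to $U^*$) we get $UT^*U^*=T^*$ by taking adjoints. Hence $UT^*TU^*=T^*T$, and therefore $U$ commutes with $C$, with $C^{1/2}$ (via the functional calculus), and with $B$.
\item Every element of the von Neumann algebra $\mathfrak A'$ is a linear combination of unitaries in it. So $C,C^{1/2},B\in\mathfrak A''=\mathfrak A$, and we can write $C=M_h$ and $B=M_b$ with $h,b\in L^\infty(\Omega)$.
\item The injectivity of $C$ gives $0<h\le1$ a.e.
\end{enumerate}

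\emph{Identifying $T$ as a multiplication operator.} We have $\operatorname{Dom}(T)=\operatorname{Ran}(C^{1/2})=\operatorname{Dom}(M_{h^{-1/2}}^{\max})$. On this domain,
\[
T=BC^{-1/2}=M_{bh^{-1/2}} .
\]
Set $g=bh^{-1/2}$, which is a.e.\ finite. To conclude $T=M_g^{\max}$ we must match the domains. From $B^*B=T^*T(I+T^*T)^{-1}=I-C$ we get $|b|^2=1-h$. Therefore
\[
1+|g|^2=\frac1h ,
\]
so $f\in L^2$ satisfies $gf\in L^2$ if and only if $h^{-1/2}f\in L^2$. Thus $\operatorname{Dom}(M_g^{\max})=\operatorname{Dom}(T)$, and $T=M_g$.

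\emph{Main obstacle.} The delicate step is 2: carefully justifying that $UTU^*=T$ passes to $T^*$, to $T^*T$, and then to the functional calculus of $(I+T^*T)^{-1}$. This needs equality of the operators including their domains, not just agreement on a core. Since we are only adapting \cite[Theorem 5.6.4]{KadisonRingroseI1983}, this step can alternatively be quoted directly from there.
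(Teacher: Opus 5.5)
Your proof is correct. It differs from the paper only in that the paper gives no argument at all: it imports the statement verbatim from Kadison--Ringrose, Theorem 5.6.4.

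\textbf{Your route.} You give a self-contained proof through the bounded transforms $C=(I+T^*T)^{-1}$ and $B=T(I+T^*T)^{-1/2}$.
\begin{enumerate}
\item Unitary invariance of $T$, including domains, passes to $T^*$, $T^*T$, $C$, $C^{1/2}$ and $B$.
\item The double commutant theorem then places $C=M_h$ and $B=M_b$ with $h,b\in L^\infty(\Omega)$.
\item The identities $\operatorname{Dom}(T)=\operatorname{Ran}(C^{1/2})$ and $B^*B=I-C$, that is $|b|^2=1-h$, give $1+|g|^2=1/h$ for $g=bh^{-1/2}$.
\item This matches $\operatorname{Dom}(T)$ with $\operatorname{Dom}(M_g^{\max})$ exactly, so $T=M_g^{\max}$.
\end{enumerate}

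\textbf{The step you flagged.} The step you marked as the main obstacle goes through as you wrote it, so there is no need to fall back on the citation. For a unitary $U$ one has $(UTU^*)^*=UT^*U^*$ exactly, domains included. Hence $UT^*TU^*=T^*T$ with equal domains, and $U$ commutes with the bounded resolvent and its square root.

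\textbf{Hidden assumptions.} Two facts you use implicitly both depend on $\widehat\nu$ being $\sigma$-finite:
\begin{itemize}
\item $h>0$ almost everywhere, which you deduce from injectivity of $M_h$;
\item $\mathcal M(L^\infty(\Omega))'=\mathcal M(L^\infty(\Omega))$.
\end{itemize}
This holds here because $G$ is metrizable, which makes $\widehat G$ $\sigma$-compact.

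\textbf{What each approach buys.} The paper's citation is shorter and places the result inside Kadison--Ringrose's general theory of operators affiliated with abelian von Neumann algebras. Your version has three advantages:
\begin{itemize}
\item It uses only standard tools: von Neumann's theorem on $T^*T$, $\operatorname{Dom}|T|=\operatorname{Dom}T$, the bounded functional calculus, and the double commutant.
\item Its first half works for an arbitrary von Neumann algebra: affiliation forces the bounded transforms into the algebra.
\item It states explicitly that ``$M_g$'' must mean the maximal multiplication operator $M_g^{\max}$. The paper's own notation $M_b$ denotes multiplication on $R(\cD_0)$, which is generally not closed. Your reading is the one under which the statement, and its use in Theorem~\ref{thm:unbdd trans inv diagonalization}, is correct.
\end{itemize}
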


\begin{thm}
\label{thm:unbdd trans inv diagonalization}
Let $S\colon \cD_0\longrightarrow H$
be a closable (possibly) unbounded operator such that
\begin{equation}
    \label{eq:rho S=S rho unbounded}
\rho(x)S=S\rho(x),\qquad \forall x\in G.
\end{equation}
Then there is a unique, up to equality almost everywhere, measurable and almost everywhere finite function $b\colon \Omega\to \bC$ such that
\[
RS R^*=M_b,
\]
where equality holds on the domain $R(\cD_0)$.
\end{thm}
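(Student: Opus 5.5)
Pass to the symbol side and reduce to the affiliation criterion of Proposition~\ref{prop:T affiliated is Mg}. Set $T=RSR^*$ with domain $R(\cD_0)$. By unitarity of $R$, $T$ is closable; let $\overline T$ be its closure, which is closed and densely defined on $L^2(\Omega,\widehat\nu)$. By \eqref{eq:Rrho x R* equals Ex}, the commutation \eqref{eq:rho S=S rho unbounded} becomes
\[
M_{E_{-x}}T=TM_{E_{-x}}\quad\text{on } R(\cD_0),
\]
and $R(\cD_0)$ is invariant under every $M_{E_x}$. Passing to graph closures gives $M_{E_x}\operatorname{Dom}(\overline T)\subset \operatorname{Dom}(\overline T)$ and $M_{E_x}\overline T=\overline T M_{E_x}$. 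By linearity, $\overline T$ then commutes in this sense with every bounded operator in $\mathcal M(\mathcal E)$.

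\textbf{Main step: extending the commutation to all of $\mathcal M(L^\infty(\Omega))$.} I expect this to be the main obstacle. By Proposition~\ref{prop:WOT SOT closure ME is MLinfty}, every $M_u$ with $u\in L^\infty$ is a SOT-limit of a net $M_{a_\alpha}$ with $a_\alpha\in\mathcal E$. I would prefer the closed-graph formulation: the graph $\Gamma$ of $\overline T$ is a closed subspace of $L^2\oplus L^2$, invariant under $M_a\oplus M_a$ for all $a\in\mathcal E$. The set of bounded operators $A$ with $(A\oplus A)\Gamma\subset\Gamma$ is SOT-closed: if $A_\alpha\to A$ strongly, then $(A_\alpha f,A_\alpha \overline Tf)\to(Af,A\overline Tf)$, and this limit lies in $\Gamma$ because $\Gamma$ is closed. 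Hence $\Gamma$ is invariant under $M_u\oplus M_u$ for every $u\in L^\infty(\Omega)$. Taking $u$ unimodular gives $M_u\operatorname{Dom}(\overline T)\subset\operatorname{Dom}(\overline T)$ and $M_u\overline T=\overline TM_u$, so $M_u\overline TM_u^*=\overline T$. Thus $\overline T$ is affiliated with the maximal abelian algebra $\mathcal M(L^\infty(\Omega))=\mathcal M(L^\infty(\Omega))'$.

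\textbf{Conclusion.} Proposition~\ref{prop:T affiliated is Mg} yields a measurable, a.e.\ finite $b$ with $\overline T=M_b^{\max}$. Restricting to $R(\cD_0)$ gives $RSR^*=M_b$ there. For uniqueness, suppose $b_1f=b_2f$ a.e.\ for all $f\in R(\cD_0)$. Taking $f=\overline{q_\bullet(y)}$ for $y\in Y_0$ (Proposition~\ref{prop:partition Y0}), the function $f$ does not vanish on $\Omega_y$, so $b_1=b_2$ a.e.\ on each $\Omega_y$. Since $(\Omega_y)_{y\in Y_0}$ is a countable partition of $\Omega$, this gives $b_1=b_2$ a.e.\ on $\Omega$.
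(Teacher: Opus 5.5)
Your proposal is correct and follows essentially the same route as the paper: you conjugate by $R$, use the SOT density of $\mathcal M(\mathcal E)$ in $\mathcal M(L^\infty(\Omega))$ to show that $\overline{T}$ commutes with every $M_u$, and then invoke the Kadison--Ringrose affiliation criterion (Proposition~\ref{prop:T affiliated is Mg}). The differences are minor. You first close the graph under $\mathcal M(\mathcal E)$ and then use SOT-closedness of the stabilizer of the closed graph, whereas the paper takes the SOT limit on $\operatorname{Graph}(T)$ first and then extends to $\operatorname{Dom}(\overline{T})$. You also supply the uniqueness argument via $RK_{0,y}=\overline{q_\bullet(y)}$ and the partition $(\Omega_y)_{y\in Y_0}$, which the paper leaves implicit.
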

\begin{proof}
    Let $T=RS R^*$. By the unitarity of $R$, $T$ is closable. We denote its closure by $\overline{T}$.
    
    As we showed above, the space $\mathcal M(\mathcal E)$ is SOT dense in $\mathcal M(L^\infty(\Omega))$.
    Thus, let $a\in L^\infty(\Omega)$ and $(a_\alpha)_\alpha$ be a net in $\mathcal E$ such that $M_{a_\alpha}$ converges strongly to $M_a$. In particular, for a fixed $f\in R(\cD_0)$, one has
    \[
    \|M_{a_{\alpha}}Tf-M_a(Tf)\|^2
    +
    \|M_{a_{\alpha}}f-M_af\|^2\to0,
    \]
    as $\alpha$ runs along the directed set.    
    By \eqref{eq:rho S=S rho unbounded} and \eqref{eq:Rrho x R* equals Ex}, for all $\alpha$ we have
    \[
    M_{a_{\alpha}}Tf=TM_{a_{\alpha}}f,
    \]
    and hence
    \[
    (M_{a_{\alpha}}f,M_{a_{\alpha}}Tf)=(M_{a_{\alpha}}f,TM_{a_{\alpha}}f)\in\operatorname{Graph}(T).
    \]
    Therefore,
    \[
    (M_af,M_aTf)=\lim_\alpha
    (M_{a_{\alpha}}f,TM_{a_{\alpha}}f)\in \operatorname{Graph}(\overline{T}).
    \]
    This implies that $M_af\in \operatorname{Dom}(\overline{T})$ and
    \begin{equation}
        \label{eq:MaTf=clTMaf}
        M_aTf=\overline{T}M_af,\qquad f\in R(\cD_0).
    \end{equation}
    
    Let now $(f,\overline{T}f)\in\operatorname{Graph}(\overline{T})$ and let $((f_\beta,Tf_\beta))_\beta$ be a net in $\operatorname{Graph}(T)$ converging to $(f,\overline{T}f)$.
    Boundedness of $M_a$ and \eqref{eq:MaTf=clTMaf} imply
    \[
    (M_af_\beta,\overline{T}M_af_\beta)=
    (M_af_\beta,M_a\overline{T}f_\beta)\to (M_af,M_a\overline{T}f),
    \]
    therefore we obtain
    \[
    M_a\overline{T}=\overline{T}M_a,
    \]
    on the domain $\operatorname{Dom}(\overline{T})$.
    
    In particular, the operator $\overline{T}$ is affiliated to $\mathcal M(L^\infty(\Omega))$ and by Proposition \ref{prop:T affiliated is Mg} there is a measurable almost everywhere finite function $b$ such that $\overline{T}=M_{b}$ on $\operatorname{Dom}(\overline{T})$. The result follows by restricting back to $\cD_0$.
\end{proof}

\begin{cor}
    \label{cor:Spsi is Mbpsi}
    Let $\psi\in \cA_0$. Then it holds on $R(\cD_0)$:
    \[
    RS_\psi R^*=M_{b_\psi},
    \]
    where
    \begin{equation}
\label{eq:symbol b unbounded S_psi}
    b_\psi(\xi) = \frac{(R\psi(\cdot,\cdot,y))(\xi)}{\overline{q_\xi(y)}},
    \qquad \xi\in \Omega_y,\ y\in Y_0.
\end{equation}
    Here, $Y_0$ is chosen as in Proposition \ref{prop:partition Y0}.
\end{cor}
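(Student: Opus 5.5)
The plan is to combine Theorem \ref{thm:unbdd trans inv diagonalization} with an explicit computation on kernel functions. First I check the hypotheses of the theorem for $S_\psi$. Proposition \ref{prop:tilde S adjoint closable} shows that $S_\psi$ is closable. Translation invariance on $\cD_0$ follows from \eqref{eq:tildeSK is psi}: both $S_\psi\rho(x)K_{p,q}=S_\psi K_{p+x,q}$ and $\rho(x)S_\psi K_{p,q}$ equal the function $(u,y)\mapsto\psi(u-x-p,y,q)$, since $\rho(x)K_{p,q}=K_{p+x,q}$ by the translation form of the kernel. The theorem then gives a measurable, a.e.\ finite $b$ with $RS_\psi R^*=M_b$ on $R(\cD_0)$, unique a.e. It remains to show that $b=b_\psi$ a.e.

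For this I evaluate both sides on the generators $RK_{p,q}=E_{-p}\overline{q_\bullet(q)}$ from Proposition \ref{prop:RK=q}. On the left, \eqref{eq:tildeSK is psi} gives
\[
RS_\psi K_{p,q}=R\big(\psi(\cdot-p,\cdot,q)\big)=R\rho(p)\psi(\cdot,\cdot,q).
\]
Here $\psi(\cdot,\cdot,q)\in H$ by \eqref{eq:A0 condition 1}, and $H$ is translation invariant. By \eqref{eq:Rrho x R* equals Ex},
\[
R\rho(p)\psi(\cdot,\cdot,q)=E_{-p}\,R\psi(\cdot,\cdot,q).
\]
On the right we get $b\,E_{-p}\overline{q_\bullet(q)}$. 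Since $|E_{-p}|=1$, this yields the identity
\[
R\psi(\cdot,\cdot,y)=b\,\overline{q_\bullet(y)}\quad\text{a.e. on }\Omega,\qquad y\in Y.
\]

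Now fix $y\in Y_0$. On $\Omega_y$ we have $|q_\xi(y)|^2=L_{\xi,y}(y)>0$ by Proposition \ref{prop:partition Y0}, so we may divide. This gives $b(\xi)=(R\psi(\cdot,\cdot,y))(\xi)/\overline{q_\xi(y)}$ for a.e.\ $\xi\in\Omega_y$. Because $Y_0$ is countable and the $\Omega_y$ partition $\Omega$, the countably many null exceptional sets combine into a null set. Hence $b=b_\psi$ a.e., and uniqueness in the theorem finishes the argument.

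The main obstacle is making sure that the identity $R\psi(\cdot,\cdot,y)=b\,\overline{q_\bullet(y)}$ holds for each fixed $y$ as an a.e.\ equality of $L^2$ functions. Only countably many $y\in Y_0$ are needed, which is what keeps the null sets under control. This also shows that \eqref{eq:symbol b unbounded S_psi} does not depend, up to null sets, on the choice of partition. A quick check that translation commutes with $S_\psi$ on all of $\cD_0$ follows by linearity from the generator computation.
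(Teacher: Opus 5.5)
Your proposal is correct and follows the paper's proof: you check closability (Proposition \ref{prop:tilde S adjoint closable}) and translation invariance, apply Theorem \ref{thm:unbdd trans inv diagonalization}, then test $M_b$ on kernel functions via Proposition \ref{prop:RK=q} to get $R\psi(\cdot,\cdot,y)=b\,\overline{q_\bullet(y)}$, and divide on each $\Omega_y$. The only difference is that you test on all $K_{p,q}$ instead of just $K_{0,y}$, which is harmless but unnecessary, since $p=0$ already gives the identity.
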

\begin{proof}
    As was already noted, $S_\psi$ is translation invariant. Moreover, Proposition \ref{prop:tilde S adjoint closable} shows that it is closable. Hence, Theorem \ref{thm:unbdd trans inv diagonalization} shows that $RS_\psi R^*=M_{b_\psi}$ for some $b_\psi$.
    By Proposition \ref{prop:RK=q}, one has
\[
R\psi(\cdot,\cdot,y)
=R\widetilde S_\psi K_{0,y}
=M_{b_\psi}\overline{q_\bullet(y)}
=b_\psi\overline{q_\bullet(y)}.
\]
Dividing on each element $\Omega_y$ of the partition gives \eqref{eq:symbol b unbounded S_psi}.
\end{proof}

\subsection{\texorpdfstring{The space of symbols $\cF_0$}{The space of symbols F0}}
Let $\mathcal F_0$ be the space of all functions $b_\psi\colon \Omega\to \bC$ with $RS_\psi R^*=M_{b_\psi}$ and $\psi\in\mathcal A_0$.
The space $\mathcal F_0$ can be characterized as follows:
\begin{prop}
\label{prop:characterization F0}
Let $b\colon \Omega\to\bC$ be a measurable function. The following conditions are equivalent:
\begin{enumerate}[label=(\arabic*)]
    \item $b\in \cF_0$.
    \item The multiplication operator $M_b$ is well-defined on
    \[
    R(\cD_0)=\operatorname{span}\{E_{-x}\overline{q_\bullet(y)}\colon x\in G,\ y\in Y\}.
    \]
    \item For every $y\in Y$ it holds
    \[
\int_{\Omega}|b(\xi)|^2|q_\xi(y)|^2 d\widehat{\nu}(\xi)<\infty.
\]
\end{enumerate}
In particular,
\[
\cF_0=\bigcap_{y\in Y} L^2(\Omega,\mu_y),
\]
where $\mu_y$ is given by \eqref{eq:definition muy}.
\end{prop}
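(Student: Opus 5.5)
I would prove the cycle $(1)\Rightarrow(2)\Rightarrow(3)\Rightarrow(1)$. Here ``$M_b$ well-defined on $R(\cD_0)$'' is read as $bf\in L^2(\Omega,\widehat\nu)$ for every $f\in R(\cD_0)$.

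For $(1)\Rightarrow(2)$, take $b=b_\psi$ with $\psi\in\cA_0$. By Corollary \ref{cor:Spsi is Mbpsi}, $RS_\psi R^*=M_{b_\psi}$ on $R(\cD_0)$. Since $S_\psi$ maps $\cD_0$ into $H$, the operator $M_{b}$ maps $R(\cD_0)$ into $R(H)=L^2(\Omega,\widehat\nu)$, which is (2).

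For $(2)\Rightarrow(3)$, apply $M_b$ to the generator $\overline{q_\bullet(y)}=RK_{0,y}$ (Proposition \ref{prop:RK=q} with $x=0$). This gives $b\,\overline{q_\bullet(y)}\in L^2(\Omega,\widehat\nu)$, which is exactly
\[
\int_\Omega|b(\xi)|^2|q_\xi(y)|^2\,d\widehat\nu(\xi)<\infty .
\]
Conversely, (3) already gives (2) directly: since $|E_{-x}|=1$, each generator $E_{-x}\overline{q_\bullet(y)}$ is mapped into $L^2$, and by linearity so is all of $R(\cD_0)$. This remark is used in the last step.

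The main step is $(3)\Rightarrow(1)$: we must construct $\psi\in\cA_0$ with $b_\psi=b$. For each $v\in Y$, (3) says $b\,\overline{q_\bullet(v)}\in L^2(\Omega,\widehat\nu)$, so we can define
\[
\psi(\cdot,\cdot,v)=R^*\big(b\,\overline{q_\bullet(v)}\big)\in H .
\]
By \eqref{eq:def Rstar}, this is the pointwise formula
\[
\psi(x,y,v)=\int_\Omega b(\xi)\overline{q_\xi(v)}q_\xi(y)\xi(x)\,d\widehat\nu(\xi).
\]
The integral converges absolutely by Cauchy--Schwarz, since $b\,\overline{q_\bullet(v)}\in L^2$ and $q_\bullet(y)\in L^2$. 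Condition \eqref{eq:A0 condition 1} holds by construction.

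For \eqref{eq:A0 condition 2}, conjugate the formula:
\[
\overline{\psi(-x,y,v)}=\int_\Omega \overline{b(\xi)}\,q_\xi(v)\,\overline{q_\xi(y)}\,\xi(x)\,d\widehat\nu(\xi)=R^*\big(\overline b\,\overline{q_\bullet(y)}\big)(x,v),
\]
using $\overline{\xi(-x)}=\xi(x)$. This lies in $H$ because $\overline b$ also satisfies (3). So $\psi\in\cA_0$.

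To identify the symbol, use \eqref{eq:tildeSK is psi}: $S_\psi K_{0,y}=\psi(\cdot,\cdot,y)$, so $R\psi(\cdot,\cdot,y)=b\,\overline{q_\bullet(y)}$. Comparing with the formula \eqref{eq:symbol b unbounded S_psi} on each $\Omega_y$, $y\in Y_0$, gives $b_\psi=b$ almost everywhere. Alternatively, both $M_{b_\psi}$ and $M_b$ are defined on $R(\cD_0)$ by the remark above, and they agree on the generators $\overline{q_\bullet(y)}$, hence $b_\psi=b$ a.e.

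The delicate points are the pointwise, rather than merely $L^2$, meaning of $\psi$, and measurability of $\psi$ in $v$. The pointwise meaning is handled by the absolutely convergent integral representation of $R^*$ above. Measurability in $v$ I expect to follow from the measurability of $(q_\xi)$ and Fubini.

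The final identity $\cF_0=\bigcap_{y}L^2(\Omega,\mu_y)$ is a restatement of (3), by the definition \eqref{eq:definition muy} of $\mu_y$.
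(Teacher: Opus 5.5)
Your proof is correct and follows the paper's argument: the decisive step is the same construction $\psi(\cdot,\cdot,v)=R^*\big(b\,\overline{q_\bullet(v)}\big)$, and you handle the other implications essentially as the paper does. The differences are minor. You verify \eqref{eq:A0 condition 2} explicitly by conjugating the integral formula for $R^*$, where the paper cites an earlier lemma. You also identify the symbol through Corollary~\ref{cor:Spsi is Mbpsi}, whereas the paper checks $S_\psi K_{x,y}=R^*M_bRK_{x,y}$ directly from the translation identity $R\rho(x)R^*=M_{E_{-x}}$.
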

\begin{proof}
The equivalence of (2) and (3) is clear. We show the equivalence of (1) and (2).

$(1)\implies (2)$\quad Let $b\in\mathcal F_0$. By definition there is $\psi\in \cA_0$ such that $b=b_\psi$ and $RS_\psi R^*=M_{b_\psi}$. Since the operator $S_\psi$ is well-defined on the common domain $\cD_0$, the operator $M_{b_\psi}$ is in turn well-defined on the domain $R(\cD_0)$.

    $(2)\implies (1)$\quad Conversely, let $b$ be a measurable function satisfying the above condition. Then the operator $M_b$ is well-defined on $R(\cD_0)$. 
    Therefore, $R^*M_{b}R$ is well-defined on $\cD_0$.
    
    Define the function $\psi\colon G\times Y\times Y\to\bC$ given by
    \[
    \psi(x,y,v)=
    R^*\big(\overline{q_\bullet(v)}b\big)(x,y).
    \]
    Note that $\psi(\cdot,\cdot,y)\in H$. An easy computation (see \cite[Lemma 4.4]{BaisMaximenkoVenkuNaidu2025}) using \eqref{eq:def Rstar}  shows that also $\overline{\psi(-\cdot,y,\cdot)}\in H$, whence $\psi\in\cA_0$.
    
    Finally, it follows from \eqref{eq:tildeSK is psi} that
    \begin{align*}
    S_\psi K_{x,y}&=\psi(\cdot-x,\cdot,y)
    =
    \rho(x)
    R^*\big(\overline{q_\bullet(y)}b\big)\\
    &=
    R^*\big(M_bE_{-x}\overline{q_\bullet(y)}\big)
    =
    R^*M_bRK_{x,y},
    \end{align*}
    which shows that $R^*M_bR=S_\psi$.
\end{proof}

\begin{cor}
\label{cor:equiv S0 A0 F0}
    Let $S$ be a (possibly) unbounded operator defined on $\cD_0$.
    The following conditions are equivalent:
    \begin{enumerate}[label=(\arabic*)]
        \item $S$ is a translation-invariant closable operator.
        \item There is $\psi\in\cA_0$ such that $S=S_\psi$. That is, $S\in \cS_0$.
        \item It holds $RSR^*=M_b$ for some $b\in \cF_0$.
    \end{enumerate}
    In particular,
    \[
    \cS_0\cap\cB(H)= \cC(\rho)=\cS_\infty.
    \]
\end{cor}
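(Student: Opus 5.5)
The plan is to prove the cycle $(1)\Rightarrow(3)\Rightarrow(2)\Rightarrow(1)$ and then read off the bounded case.

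For $(1)\Rightarrow(3)$, let $S$ be closable and translation invariant on $\cD_0$. Theorem \ref{thm:unbdd trans inv diagonalization} gives a measurable, almost everywhere finite $b$ with $RSR^*=M_b$ on $R(\cD_0)$. Since $S$ maps $\cD_0$ into $H$, the operator $M_b$ maps $R(\cD_0)$ into $L^2(\Omega,\widehat\nu)$, so condition (2) of Proposition \ref{prop:characterization F0} holds. Hence $b\in\cF_0$.

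For $(3)\Rightarrow(2)$, take $b\in\cF_0$. The proof of Proposition \ref{prop:characterization F0}, direction $(2)\Rightarrow(1)$, constructs $\psi\in\cA_0$ with $R^*M_bR=S_\psi$ on $\cD_0$. Therefore $S=R^*M_bR=S_\psi$, so $S\in\cS_0$.

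For $(2)\Rightarrow(1)$, if $S=S_\psi$ with $\psi\in\cA_0$, then $S$ is translation invariant by \eqref{eq:tildeSK is psi}, as already remarked in the paper. It is also closable by Proposition \ref{prop:tilde S adjoint closable}.

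For the final identity $\cS_0\cap\cB(H)=\cC(\rho)=\cS_\infty$, I would show the two inclusions of each equality.

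\textbf{$\cS_\infty\subset\cS_0\cap\cB(H)$.} This follows from the definitions, using the convention that a bounded $S_\psi$ is identified with its bounded extension.

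\textbf{$\cS_0\cap\cB(H)\subset\cS_\infty$.} If $S_\psi\in\cS_0$ is bounded on $\cD_0$, it extends uniquely to a bounded operator, so $\psi\in\cA_\infty$.

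\textbf{$\cS_\infty\subset\cC(\rho)$.} Let $S\in\cS_\infty$. Its extension commutes with every $\rho(x)$ on the dense set $\cD_0$, and by continuity it commutes with $\rho(x)$ on all of $H$.

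\textbf{$\cC(\rho)\subset\cS_\infty$.} Let $T\in\cC(\rho)$ and restrict it to $\cD_0$. The restriction is closable, being bounded, and translation invariant, so by $(1)\Rightarrow(2)$ it equals some $S_\psi$. Since it is bounded, $\psi\in\cA_\infty$, and its bounded extension is $T$. Alternatively, one can simply cite $\cS_\infty=\cC(\rho)$ from \cite{BaisMaximenkoVenkuNaidu2025}.

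The only delicate point is bookkeeping of domains. "Equality" means equality on $\cD_0$, respectively $R(\cD_0)$, and the identification of bounded elements of $\cS_0$ with their extensions must be stated explicitly. Otherwise every step reduces directly to earlier results.
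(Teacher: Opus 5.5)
Your proof is correct and uses the same ingredients as the paper: Theorem~\ref{thm:unbdd trans inv diagonalization}, Proposition~\ref{prop:characterization F0}, and closability of $S_\psi$ via the formal adjoint in Proposition~\ref{prop:tilde S adjoint closable}; you only run the cycle as $(1)\Rightarrow(3)\Rightarrow(2)\Rightarrow(1)$ instead of the paper's $(2)\Rightarrow(3)\Rightarrow(1)\Rightarrow(2)$, in which $(3)\Rightarrow(1)$ is checked directly on the symbol side using $M_{E_{-x}}M_b=M_bM_{E_{-x}}$ and $M_b\subset M_b^{\text{max}}$. Your explicit derivation of $\cC(\rho)=\cS_\infty$ from $(1)\Rightarrow(2)$ is a small bonus, since the paper leaves the final identity implicit and relies on the result of \cite{BaisMaximenkoVenkuNaidu2025}.
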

\begin{proof}
That $(2)$ implies $(3)$ is in essence the content of Corollary \ref{cor:Spsi is Mbpsi}. On the other hand, $(3)$ implies $(1)$ since for all $x\in G$ it holds $M_{E_{-x}}M_b=M_bM_{E_{-x}}$ and thus
\[
\rho(x)S=S\rho(x).
\]
The operator $S$ is closable since $M_b$ is closable, as $M_b\subset M_b^{\text{max}}$.

    It remains to show that $(1)$ implies $(2)$. Let $S$ be closable and translation invariant. By Theorem \ref{thm:unbdd trans inv diagonalization}, we have $RSR^*=M_b$ for some measurable a.e. finite $b$. Proposition \ref{prop:characterization F0} implies that $b\in \cF_0$ and hence there is some $\psi\in \cA_0$ such that $b=b_\psi$ and $S=R^*M_bR=R^*M_{b_\psi} R=S_\psi$.    
\end{proof}
Since a multiplication operator is bounded if and only if its symbol is essentially bounded, we obtain
\[
S_\psi\in \cS_\infty \iff b_\psi\in L^\infty(\Omega).
\]
Therefore, we define
\[
\cF_\infty=L^\infty(\Omega).
\]
We obviously have $\cF_\infty\subset\cF_0$.

On the other hand, note that Proposition \ref{prop:characterization F0} suggests defining a natural topology on $\mathcal F_0$.
For each $y\in Y$ consider the seminorm
\[
\| \cdot \|_{y}\colon \mathcal F_0\longrightarrow [0,\infty),
\]
given by the norm in the space $L^2(\Omega,\mu_y)$. That is,
\[
\| a \|_{y}
=
\left(
\int_{\Omega}|a(\xi)|^2|q_\xi(y)|^2d\widehat{\nu}(\xi)
\right)^{\frac{1}{2}},\quad a\in \mathcal F_0.
\]
We endow $\mathcal F_0$ with the initial topology generated by the family of seminorms $(\|\cdot\|_y)_{y\in Y}$ and denote this topology by $\tau_{\cF_0}$.

Recall that a Cauchy net in a topological vector space is a net $(x_\alpha)_\alpha$ such that for every neighborhood $U$ there is some $\alpha_0$ such that $x_{\alpha_1}-x_{\alpha_2}\in U$ for every $\alpha_1,\alpha_2\succeq \alpha_0$. A topological vector space is \emph{complete} if every Cauchy net is convergent. If the topology is induced by a pseudometric, then this coincides with the usual notion using sequences (sequential completeness). This holds, for example, in a Fréchet space. See \cite{NariciBeckenstein2010,treves2006topological} for more details.
\begin{prop}
The space $(\mathcal F_0,\tau_{\cF_0})$ is a complete locally convex topological vector space.
\end{prop}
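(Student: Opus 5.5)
Local convexity is immediate, since $\tau_{\cF_0}$ is by definition the initial topology of the family of seminorms $(\|\cdot\|_y)_{y\in Y}$. The main point is to check that this family separates points, so that $\tau_{\cF_0}$ is Hausdorff on $\cF_0$, viewed as a space of a.e.-equivalence classes with respect to $\widehat\nu$ on $\Omega$. Suppose $\|a\|_y=0$ for every $y\in Y$. Then $a=0$ $\mu_y$-a.e., hence $a=0$ $\widehat\nu$-a.e. on $U_y$. By Proposition \ref{prop:partition Y0} there is a countable $Y_0\subset Y$ with $\Omega=\bigcup_{y\in Y_0}\Omega_y$ and $\Omega_y\subset U_y$. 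Therefore $a=0$ $\widehat\nu$-a.e. on $\Omega$.

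For completeness, let $(a_\alpha)$ be a Cauchy net in $\cF_0$. For each $y\in Y$ the net is Cauchy in the Hilbert space $L^2(\Omega,\mu_y)$, so it converges there to some limit $c_y$. The key step is to glue these limits into one measurable function $a$ on $\Omega$. The natural candidate is
\[
a:=c_y \quad\text{on } \Omega_y,\qquad y\in Y_0,
\]
which is well defined up to null sets because the $\Omega_y$ are disjoint and $\mu_y$ is equivalent to $\widehat\nu$ on $\Omega_y$.

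We must then show $a=c_{y'}$ $\mu_{y'}$-a.e. for an arbitrary $y'\in Y$, not only for $y'\in Y_0$. I expect this consistency step to be the main obstacle, because nets do not allow extraction of a.e.-convergent subsequences. I would bypass pointwise arguments by working with measures. Fix $y'\in Y$ and $y\in Y_0$, and let $E\subset \Omega_y\cap U_{y'}$ be a measurable set on which the density $h=|q_\bullet(y')|^2/|q_\bullet(y)|^2$ is bounded by $n$. Then
\[
\int_E|f|^2\,d\mu_{y'}\le n\int_E |f|^2\,d\mu_y
\]
for every measurable $f$. Hence $a_\alpha\mathbf 1_E\to c_y\mathbf 1_E$ in $L^2(\mu_{y'})$, while also $a_\alpha\mathbf 1_E\to c_{y'}\mathbf 1_E$ in $L^2(\mu_{y'})$. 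Uniqueness of limits gives $c_{y'}=c_y=a$ $\mu_{y'}$-a.e. on $E$. Exhausting $\Omega_y\cap U_{y'}$ by the sets $\{h\le n\}$, $n\in\bN$, and then taking the countable union over $y\in Y_0$, we get $a=c_{y'}$ $\mu_{y'}$-a.e. on $U_{y'}$. Since $\mu_{y'}$ is concentrated on $U_{y'}$, this gives $a=c_{y'}$ in $L^2(\Omega,\mu_{y'})$.

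It follows that $a\in L^2(\Omega,\mu_{y'})$ for every $y'$, so $a\in\cF_0$ by Proposition \ref{prop:characterization F0}. Moreover $\|a_\alpha-a\|_{y'}\to0$ for every $y'\in Y$, which is exactly convergence $a_\alpha\to a$ in $\tau_{\cF_0}$. This proves completeness.
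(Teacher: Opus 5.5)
Your proof is correct and follows essentially the same route as the paper: you glue the $L^2(\Omega,\mu_y)$-limits along the countable partition $(\Omega_y)_{y\in Y_0}$ and check consistency on an exhausting family of sets where the relevant norms are comparable. The only cosmetic difference is that you dominate $\mu_{y'}$ by $\mu_y$ via the bounded density $|q_\bullet(y')|^2/|q_\bullet(y)|^2$, whereas the paper compares both weighted norms with $L^2(\widehat\nu)$ on the sets $\Omega_{y_0}\cap\{|q_\bullet(y_0)|\ge 1/k\}\cap\{|q_\bullet(y)|\ge 1/n\}$.
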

\begin{proof}
Since the topology $\tau_{\cF_0}$ is generated by seminorms, the space is locally convex. Moreover, using the seminorms $\|\cdot\|_y$ associated to $y\in Y_0$, where $Y_0$ is given by the partition in Proposition \ref{prop:partition Y0}, one can see that $\cF_0$ is Hausdorff. We only need to prove completeness.

Let $(a_\alpha)$ be a Cauchy net in $\cF_0$.
For every $y\in Y$, the net $(a_\alpha)$ is Cauchy in the Hilbert space $L^2(\Omega,\mu_y)$. Hence there is an element $a^y$ of this space such that
\[
\|a_\alpha-a^y\|_y\to 0.
\]
We now take the countable partition $(\Omega_y)_{y\in Y_0}$ from Proposition \ref{prop:partition Y0} and define a measurable function $a$ on $\Omega$ by
\[
a|_{\Omega_y}=a^y|_{\Omega_y},\qquad y\in Y_0.
\]
This definition is harmless after modifying the representatives of the functions $a^y$ on null sets.

It remains to check that $a$ is the limit of $(a_\alpha)$ for every seminorm $\|\cdot\|_y$. Fix $y\in Y$. On each set
\[
\Omega_{y_0}\cap\{ |q_\bullet(y_0)|\ge 1/k\}\cap\{ |q_\bullet(y)|\ge 1/n\},
\qquad y_0\in Y_0,\ k,n\in\bN,
\]
convergence in both weighted $L^2$-spaces implies convergence in the ordinary $L^2$-space with respect to $\widehat\nu$. Hence the two limits $a^{y_0}$ and $a^y$ agree there. Since these sets exhaust $\Omega_{y_0}\cap\{|q_\bullet(y)|>0\}$ up to null sets, we get $a=a^y$ $\mu_y$-almost everywhere. Therefore
\[
\|a_\alpha-a\|_y=\|a_\alpha-a^y\|_y\to 0,
\]
for every $y\in Y$, which proves that $a_\alpha\to a$ in $\tau_{\cF_0}$.
\end{proof}

\begin{cor}
    \label{cor:F0 frechet countable seminorms}
    $(\cF_0,\tau_{\cF_0})$ is a Fréchet space if and only if its topology is generated by a countable subset of $(\|\cdot\|_y)_{y\in Y}$.
\end{cor}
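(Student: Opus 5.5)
The argument uses the general theory of locally convex spaces together with the completeness of $(\cF_0,\tau_{\cF_0})$ just established. A Fréchet space is a complete, metrizable, locally convex space. By the previous proposition, $\cF_0$ is already complete, locally convex and Hausdorff. So the corollary reduces to showing that $\tau_{\cF_0}$ is metrizable if and only if it is generated by countably many of the seminorms $\|\cdot\|_y$.

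For the ``if'' direction, suppose $\tau_{\cF_0}$ is generated by $(\|\cdot\|_{y_n})_{n\in\bN}$. Then the standard translation-invariant metric
\[
d(a,b)=\sum_{n}2^{-n}\frac{\|a-b\|_{y_n}}{1+\|a-b\|_{y_n}}
\]
induces $\tau_{\cF_0}$; it is a genuine metric because the topology is Hausdorff. Completeness of the net version implies sequential completeness. Hence $\cF_0$ is Fréchet.

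For the ``only if'' direction, suppose $\cF_0$ is Fréchet, so $0$ has a countable base of neighbourhoods $(V_n)_n$. Since the topology is the initial topology of the family $(\|\cdot\|_y)_{y\in Y}$, each $V_n$ contains a basic neighbourhood of the form
\[
\{a\colon \|a\|_{y}<\varepsilon_n,\ y\in F_n\},
\]
with $F_n\subset Y$ finite and $\varepsilon_n>0$. Set $Y'=\bigcup_n F_n$, which is countable. Every $\tau_{\cF_0}$-neighbourhood of $0$ contains some $V_n$, and hence contains a neighbourhood defined by finitely many seminorms from $Y'$. So the topology generated by $(\|\cdot\|_y)_{y\in Y'}$ is finer than $\tau_{\cF_0}$. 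It is also coarser, being generated by a subfamily of the defining seminorms. The two topologies therefore coincide.

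The only point requiring care is the step from a countable neighbourhood base to countably many seminorms. This relies on basic neighbourhoods in a seminorm-generated topology involving only finitely many seminorms, which holds by definition of the initial topology. No genuine obstacle is expected; the corollary is essentially formal.
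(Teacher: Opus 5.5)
Your proposal is correct and matches the paper's approach: the paper gives no written proof, but the corollary follows from the preceding completeness (and Hausdorff) result together with the standard fact that a locally convex topology is metrizable exactly when a countable family of its seminorms generates it. You also correctly extract the countable subfamily from the given seminorms $\|\cdot\|_y$, via the finite sets $F_n$, rather than from an arbitrary countable family of continuous seminorms, which is what the statement requires.
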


\section{Commutative topological algebras}
\label{sec:topological algebras}

\subsection{\texorpdfstring{The algebra $\cF_M$}{The algebra FM}}

The product of two elements in $\cF_0$ is well-defined as the usual point-wise product. However, as one might expect, the space $\mathcal F_0$ is in general not closed under multiplication. Indeed, if $a,b\in \mathcal F_0$, then by Proposition \ref{prop:characterization F0} we have $ab\in \mathcal F_0$ only when $\|ab\|_y<\infty$ for all $y\in Y$.
If this is the case, the product is obviously commutative.
This gives immediately the following result.
\begin{prop}
\label{prop:product in F0 fixed a}
Given a fixed $a\in \cF_0$ one has $ab\in \cF_0$ if and only if
    \[
    b\in L^2(\Omega,|a|^2|q_\bullet(y)|^2d\widehat{\nu}),\qquad\forall y\in Y.
    \]
    In particular, it holds $ab\in\cF_0$ if $a,b\in \cF_0$ and
    $a\in L^\infty(\Omega)$ or $b\in L^\infty(\Omega)$.
\end{prop}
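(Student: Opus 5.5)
The plan is to reduce everything to the characterization of $\cF_0$ in Proposition \ref{prop:characterization F0}. By that result, for a measurable function $c$ on $\Omega$ we have $c\in\cF_0$ if and only if
\[
\int_\Omega |c(\xi)|^2|q_\xi(y)|^2\,d\widehat{\nu}(\xi)<\infty \qquad\text{for every } y\in Y.
\]
We apply this with $c=ab$, which is measurable as a product of measurable functions.

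\textbf{Equivalence.} Fix $y\in Y$ and write out the condition for $c=ab$:
\[
\int_\Omega |b(\xi)|^2\,|a(\xi)|^2|q_\xi(y)|^2\,d\widehat{\nu}(\xi)<\infty .
\]
This says exactly that $b\in L^2(\Omega,|a|^2|q_\bullet(y)|^2d\widehat{\nu})$. Here $|a|^2|q_\bullet(y)|^2d\widehat{\nu}$ is a genuine positive measure, since its density is a nonnegative measurable function. Requiring this for every $y\in Y$ gives the stated equivalence. This step is a direct rewriting with no hidden difficulty.

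\textbf{The bounded case.} Suppose $a,b\in\cF_0$ and $b\in L^\infty(\Omega)$. For every $y\in Y$ we estimate
\[
\int_\Omega |b|^2|a|^2|q_\bullet(y)|^2\,d\widehat{\nu}
\le \|b\|_\infty^2 \int_\Omega |a|^2|q_\bullet(y)|^2\,d\widehat{\nu}
=\|b\|_\infty^2\|a\|_y^2<\infty .
\]
By the equivalence just proved, $ab\in\cF_0$.

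If instead $a\in L^\infty(\Omega)$, the same estimate holds with the roles of $a$ and $b$ exchanged, using $\|b\|_y<\infty$. Equivalently, this follows from commutativity of the pointwise product, $ab=ba$.

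The only point needing care is that elements of $\cF_0$ are defined up to null sets, so the product is taken a.e. Since $\mu_y\ll\widehat{\nu}$, the integrals do not depend on the choice of representatives.
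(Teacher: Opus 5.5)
Your proposal is correct and follows the paper's own route: the statement is an immediate rewriting of condition (3) of Proposition \ref{prop:characterization F0} applied to $ab$, i.e.\ $\|ab\|_y<\infty$ for all $y\in Y$. The bounded case then follows from the estimate $\|ab\|_y\le\|b\|_\infty\|a\|_y$, exactly as you wrote.
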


Generally, it is easy to construct examples of functions $a,b,c\in \mathcal F_0$ such that $ab\in \mathcal F_0$ but $ac\notin \mathcal F_0$. A much more subtle question is how large can one form an algebra inside $\cF_0$ with this product.

Define
\begin{equation}
\label{eq:def FM powers}
\mathcal F_M
=
\{a\in\mathcal F_0\colon a^m\in\mathcal F_0\text{ for every }m\in\bN\}.
\end{equation}
This space is an algebra as follows from the simple estimate
\begin{equation}
    \label{eq:FM product ineq} 
    \|ab\|_{y,m}\leq \|a\|_{y,2m}\|b\|_{y,2m}.
\end{equation}
The following proposition shows that this is precisely the largest subalgebra of $\mathcal F_0$ with respect to pointwise multiplication.
\begin{prop}
\label{prop:characterization FM}
Let $a\colon \Omega\to \bC$ be a measurable function. Then the following conditions are equivalent:
\begin{enumerate}
    \item[(1)] $a\in \mathcal F_M$.
    \item[(2)] $a^m\in \mathcal F_0$ for every $m\in\bN$.
    \item[(3)] $\|a^m\|_y<\infty$ for every $y\in Y$ and $m\in\bN$.
\end{enumerate}
Moreover, $\mathcal F_M$ is an algebra and every subalgebra of $\mathcal F_0$ is contained in $\mathcal F_M$.
\end{prop}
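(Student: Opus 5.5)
The equivalences are essentially unwinding definitions, so most of the work is in the two structural claims at the end. For (1)$\iff$(2), I will note that this is exactly the definition \eqref{eq:def FM powers}. The condition $a\in\cF_0$ is the case $m=1$ of (2), so nothing further is needed. For (2)$\iff$(3), I will apply Proposition~\ref{prop:characterization F0} to each measurable function $a^m$. It says $a^m\in\cF_0$ if and only if $\int_\Omega |a^m|^2|q_\xi(y)|^2\,d\widehat\nu<\infty$ for all $y\in Y$, that is, $\|a^m\|_y<\infty$ for all $y$.

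To see that $\cF_M$ is an algebra, I first fix notation: $\|a\|_{y,m}$ means $\|a^m\|_y^{1/m}$, which is the $L^{2m}(\Omega,\mu_y)$-norm of $a$. Closure under scalar multiples is clear. For sums, $(a+b)^m$ is a finite linear combination of the products $a^kb^{m-k}$, so it suffices to show that mixed products of powers lie in $\cF_0$. Products follow from the Cauchy--Schwarz/Hölder estimate \eqref{eq:FM product ineq}:
\[
\int |a|^{2m}|b|^{2m}\,d\mu_y\le \Bigl(\int |a|^{4m}\,d\mu_y\Bigr)^{1/2}\Bigl(\int|b|^{4m}\,d\mu_y\Bigr)^{1/2},
\]
and the right side is finite by (3) applied with $2m$. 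Hence $(ab)^m\in\cF_0$ for all $m$. The same Hölder argument handles $a^kb^{m-k}$, using exponents $2m/k$ and $2m/(m-k)$. Alternatively, one can simply use the bound $|a+b|^{2m}\le 2^{2m}\max(|a|,|b|)^{2m}\le 2^{2m}(|a|^{2m}+|b|^{2m})$, which gives sums directly.

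For maximality, let $\cB\subset\cF_0$ be a subalgebra under pointwise multiplication and take $a\in\cB$. By induction on $m$, closure under multiplication gives $a^m=a^{m-1}a\in\cB\subset\cF_0$ for every $m\in\bN$, and hence $a\in\cF_M$.

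I do not expect a genuine obstacle. The only point needing care is the sum estimate. The plan is to use the elementary pointwise inequality $|a+b|^{2m}\le 2^{2m-1}(|a|^{2m}+|b|^{2m})$, which follows from convexity of $t\mapsto t^{2m}$, so that the sum case does not depend on the product estimate at all.
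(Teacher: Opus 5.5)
Your proposal is correct and follows essentially the same route as the paper. You read the equivalences off the definition of $\mathcal F_M$ and Proposition~\ref{prop:characterization F0}, you bound products by the Cauchy--Schwarz estimate $\int_\Omega|a|^{2m}|b|^{2m}\,d\mu_y\le\|a^{2m}\|_y\|b^{2m}\|_y$, and you get maximality because a subalgebra contains all powers of its elements.

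The only minor difference is in the sum step. You use the pointwise convexity bound $|a+b|^{2m}\le 2^{2m-1}(|a|^{2m}+|b|^{2m})$, which is essentially Minkowski in $L^{2m}(\Omega,\mu_y)$. The paper instead expands $(a+b)^n$ and controls each monomial $a^rb^s$ by the same Cauchy--Schwarz estimate. Both work.
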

\begin{proof}
The equivalence of (1) and (2) is the definition \eqref{eq:def FM powers}, and Proposition \ref{prop:characterization F0} gives the equivalence of (2) and (3).

Let $a,b\in\mathcal F_M$.  For $r,s\in\bN$ and $y\in Y$, H\"older's inequality gives
\[
\int_\Omega |a(\xi)|^{2r}|b(\xi)|^{2s}|q_\xi(y)|^2d\widehat\nu(\xi)
\le
\|a^{2r}\|_y\,\|b^{2s}\|_y<\infty.
\]
Thus every monomial $a^rb^s$ belongs to $\mathcal F_0$.  Expanding powers of $a+b$ shows $(a+b)^n\in\mathcal F_0$ for all $n$, and the same estimate gives $(ab)^n\in\mathcal F_0$ for all $n$.  Hence $a+b$ and $ab$ belong to $\mathcal F_M$.

Finally, if $\mathcal B\subset\mathcal F_0$ is any algebra and $a\in\mathcal B$, then all powers $a^m$ belong to $\mathcal B\subset\mathcal F_0$.  Therefore $a\in\mathcal F_M$, proving $\mathcal B\subset\mathcal F_M$.
\end{proof}

The above proposition shows that
\[
\mathcal F_M=\bigcap_{y\in Y, m\in \bN} L^{2m}(\Omega,\mu_y).
\]
Consider thus the family of seminorms
\[
a\mapsto \|a\|_{y,m}\coloneqq(\|a^m\|_y)^{1/m},
\]
for $y\in Y$ and $m\in \bN$, and endow $\mathcal F_M$ with the initial topology $\tau_{\cF_M}$ generated by them.
Furthermore, we can endow $\mathcal F_M$ with the natural involution
\[
*\colon \mathcal F_M\longrightarrow \mathcal F_M,\qquad 
a\longmapsto \overline{a}.
\]
This involution is obviously continuous with respect to the topology in $\mathcal F_M$ and extends the canonical involution in $\cF_\infty=L^\infty(\Omega)$. Note, however, that it is defined on functions rather than on the corresponding multiplication operators. In terms of operators, one can equivalently define the involution through the rule
\begin{equation}
\label{eq:def invol in F}
\mathcal F_M\ni a\mapsto M_a\mapsto M_a^*\mapsto M_a^*|_{R(\cD_0)}=M_{\overline{a}}|_{R(\cD_0)}\mapsto \overline{a}\in \mathcal F_M.
\end{equation}
This will come in handy when translating the involution to operators.

\begin{rem}
\label{rem:arens algebras}
Note that, for every \emph{fixed} $y\in Y$, the space
\[
L^\omega(\Omega,\mu_y)
\coloneqq
\bigcap_{m\in \bN}
L^{2m}(\Omega,\mu_y)
=
\bigcap_{p\geq 1}
L^{p}(\Omega,\mu_y)
\]
is already a complete locally convex topological algebra with respect to the initial topology generated by the family $(\|\cdot\|_{y,m})_{m\in\bN}$. These algebras are well known in the literature and are called Arens algebras (see, for example, \cite{Arens1946,AbdullaevChilin1998}).
\end{rem}

\begin{prop}
The algebra $\mathcal F_M$ endowed with the initial topology $\tau_{\cF_M}$ is a complete locally convex topological $*$-algebra and is continuously included (as a topological vector space) in $\mathcal F_0$.

Furthermore, the algebra $\cF_\infty=L^\infty(\Omega)$, endowed with its usual $L^\infty(\Omega,\widehat\nu)$ norm topology, is continuously included in $\mathcal F_M$ and for every $f\in \cF_\infty$ one has
\[
\|f\|_\infty =
\sup_{y\in Y_0}\lim_{m\to\infty}\|f\|_{y,m},
\]
where $\|\cdot\|_\infty$ denotes the essential supremum with respect to $\widehat\nu$ and $Y_0$ is associated to the partition $(\Omega_y)_{y\in Y_0}$ from Proposition \ref{prop:partition Y0}.
\end{prop}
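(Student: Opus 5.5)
The plan has three parts: topological algebra properties of $\cF_M$, completeness, and the $L^\infty$ comparison.

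\textbf{Topological algebra structure.} Local convexity holds because $\tau_{\cF_M}$ is generated by the seminorms $\|\cdot\|_{y,m}$, which are the norms of $L^{2m}(\Omega,\mu_y)$ and hence satisfy the triangle inequality. Hausdorffness follows from the case $m=1$ as for $\cF_0$, using $y\in Y_0$. Joint continuity of multiplication follows from \eqref{eq:FM product ineq}: the neighbourhood $\{\|ab\|_{y,m}<\varepsilon\}$ is controlled by $\|a\|_{y,2m}\|b\|_{y,2m}$. Writing $ab-a_0b_0=(a-a_0)b+a_0(b-b_0)$ then yields continuity at every point. The involution $a\mapsto\overline a$ preserves each seminorm, so it is an isometric (hence continuous) conjugate-linear involution. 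The inclusion $\cF_M\hookrightarrow\cF_0$ is continuous because $\|a\|_y=\|a\|_{y,1}$.

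\textbf{Completeness.} Take a Cauchy net $(a_\alpha)$ in $\cF_M$. Since $\|\cdot\|_y=\|\cdot\|_{y,1}$, it is Cauchy in $\cF_0$, so it converges to some $a\in\cF_0$ by the completeness of $\cF_0$. For each fixed $y$ and $m$, the net is also Cauchy in the Banach space $L^{2m}(\Omega,\mu_y)$, with limit $a^{y,m}$. Because $\mu_y$ is finite, convergence in $L^{2m}(\mu_y)$ implies convergence in $L^2(\mu_y)$. Hence $a^{y,m}=a$ $\mu_y$-a.e., so $a\in L^{2m}(\mu_y)$ and $\|a_\alpha-a\|_{y,m}\to0$. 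Thus $a\in\cF_M$ and $a_\alpha\to a$ in $\tau_{\cF_M}$.

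\textbf{Inclusion of $L^\infty$ and the norm formula.} For $f\in L^\infty$ we have $\|f\|_{y,m}\le \|f\|_\infty\,\mu_y(\Omega)^{1/(2m)}$. This shows $L^\infty\subset\cF_M$, and by linearity the inclusion is continuous, since each seminorm is bounded by a constant times $\|f\|_\infty$. For the formula, the standard fact that $L^p$ norms on a finite measure space converge to the essential supremum gives
\[
\lim_{m\to\infty}\|f\|_{y,m}=\|f\|_{L^\infty(\mu_y)}.
\]
This is the essential supremum of $|f|$ over $U_y$ with respect to $\widehat\nu$, because $\mu_y$ and $\widehat\nu|_{U_y}$ are mutually absolutely continuous. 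Each such quantity is at most $\|f\|_\infty$. Conversely, Proposition \ref{prop:partition Y0} gives $\Omega_y\subset U_y$ for $y\in Y_0$, and the sets $\Omega_y$ with $y\in Y_0$ cover $\Omega$. Hence the supremum over $Y_0$ of $\operatorname{ess\,sup}_{U_y}|f|$ is at least $\sup_{y\in Y_0}\operatorname{ess\,sup}_{\Omega_y}|f|=\|f\|_\infty$, the last equality holding because the partition is countable.

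\textbf{Expected obstacle.} The only subtle step is identifying the limits $a^{y,m}$ across different $m$ and $y$ in the completeness argument. Comparing everything with the single $\cF_0$-limit, as above, handles this cleanly.
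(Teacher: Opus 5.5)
Your proposal is correct and follows the paper's proof essentially step by step. Completeness goes through the $\cF_0$-limit, with the $L^{2m}(\mu_y)$-limits identified with it (you use the H\"older embedding $L^{2m}(\mu_y)\hookrightarrow L^2(\mu_y)$ where the paper uses convergence in $\mu_y$-measure), and the norm formula comes from $L^p\to L^\infty$ convergence for the finite measure $\mu_y$. Your last step is in fact slightly more careful than the paper's: you correctly identify $\lim_m\|f\|_{y,m}$ as the $\widehat\nu$-essential supremum of $|f|$ over $U_y\supseteq\Omega_y$ rather than over $\Omega_y$, and then bound it from both sides using the countable covering of $\Omega$ by the $\Omega_y$.
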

\begin{proof}
The product in $\cF_M$ is continuous because of \eqref{eq:FM product ineq}. Let now $(a_\alpha)$ be a Cauchy net in $\cF_M$. Since the seminorms with $m=1$ are precisely the seminorms defining $\tau_{\cF_0}$, the net is Cauchy in $\cF_0$. By completeness of $\cF_0$, there exists $a\in\cF_0$ such that
\[
\|a_\alpha-a\|_y\to 0,\qquad y\in Y.
\]
Fix $y\in Y$ and $m\in\bN$. The same net is Cauchy in the Banach space $L^{2m}(\Omega,\mu_y)$, because
\[
\|a_\alpha-a_\beta\|_{L^{2m}(\Omega,\mu_y)}=\|a_\alpha-a_\beta\|_{y,m}.
\]
Therefore there is $c_{y,m}\in L^{2m}(\Omega,\mu_y)$ such that $a_\alpha\to c_{y,m}$ in $L^{2m}(\Omega,\mu_y)$. Since $\mu_y(\Omega)<\infty$, convergence in either $L^2(\mu_y)$ or $L^{2m}(\mu_y)$ implies convergence in $\mu_y$-measure. The limit in measure is unique, so $c_{y,m}=a$ $\mu_y$-almost everywhere. Consequently $a\in L^{2m}(\Omega,\mu_y)$ and
\[
\|a_\alpha-a\|_{y,m}
=
\left(\int_\Omega |a_\alpha-a|^{2m}\,d\mu_y\right)^{1/(2m)}
\to 0.
\]
Since this holds for every $y\in Y$ and $m\in\bN$, we have $a\in\cF_M$ and $a_\alpha\to a$ in $\tau_{\cF_M}$. This proves completeness.

On the other hand, let $f\in \cF_\infty=L^\infty(\Omega)$. Then for every $y\in Y$ and $m\in\bN$ one has
\[
\|f\|_{y,m}^{2m}=\int_{\Omega}|f(\xi)|^{2m}|q_\xi(y)|^2d\widehat{\nu}(\xi)\leq \|f\|_\infty^{2m}\|q_{\bullet}(y)\|^2_{L^2(\Omega,\widehat{\nu})},
\]
showing that the inclusion is continuous. Moreover, by the same estimate, for every $y\in Y$ one has $f\in \bigcap_{m\in\bN}L^{2m}(\Omega,|q_\bullet(y)|^2d\widehat{\nu})$. Since $\mu_y$ is finite, the standard formula for $L^p$ norms on finite measure spaces gives
\[
\|f|_{U_y}\|_{L^\infty(\mu_y)}=\lim_{m\to\infty}\|f\|_{y,m}.
\]
For $y\in Y_0$, the measures $\mu_y$ and $\widehat\nu$ are equivalent on $\Omega_y$, so this limit is also the $\widehat\nu$-essential supremum of $f$ on $\Omega_y$. The formula for $\|f\|_\infty$ follows from the fact that $\Omega=\bigcup_{y\in Y_0}\Omega_y$.
\end{proof}

\begin{cor}
\label{cor:F0 frechet then FM frechet}
If $(\cF_0,\tau_{\cF_0})$ is a Fréchet space, then
$\mathcal F_M$ is a Fréchet $*$-algebra.
\end{cor}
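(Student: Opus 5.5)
By Corollary \ref{cor:F0 frechet countable seminorms}, if $(\cF_0,\tau_{\cF_0})$ is Fréchet then there is a countable set $Y_1\subset Y$ such that the seminorms $(\|\cdot\|_y)_{y\in Y_1}$ generate $\tau_{\cF_0}$. The preceding proposition already shows that $\cF_M$ is a complete, Hausdorff, locally convex $*$-algebra, and its product is jointly continuous by \eqref{eq:FM product ineq}. So the only missing point is metrizability. For this it suffices to show that the countable family $(\|\cdot\|_{y,m})_{y\in Y_1,\,m\in\bN}$ generates $\tau_{\cF_M}$.

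The key step is domination of the higher seminorms. Fix $y\in Y$ and $m\in\bN$. Since $\tau_{\cF_0}$ is generated by the $Y_1$-seminorms, the continuous seminorm $\|\cdot\|_y$ on $\cF_0$ is dominated by a finite combination. Concretely, there are $y_1,\dots,y_k\in Y_1$ and $C>0$ with
\[
\|a\|_y\le C\max_{1\le j\le k}\|a\|_{y_j},\qquad a\in\cF_0 .
\]
If $a\in\cF_M$, then $a^m\in\cF_0$, so we may apply this inequality to $a^m$ in place of $a$. Taking $m$-th roots gives
\[
\|a\|_{y,m}=\|a^m\|_y^{1/m}\le C^{1/m}\max_j\|a^m\|_{y_j}^{1/m}=C^{1/m}\max_j\|a\|_{y_j,m}.
\]
Hence every defining seminorm of $\tau_{\cF_M}$ is dominated by finitely many seminorms from the countable family. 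This family therefore generates $\tau_{\cF_M}$, and so $\tau_{\cF_M}$ is metrizable.

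With metrizability in hand, completeness (from the preceding proposition) together with local convexity makes $\cF_M$ a Fréchet space. Continuity of the multiplication and of the involution $a\mapsto\overline a$ then yields a Fréchet $*$-algebra. The only subtle point is the domination step. There we must use that the domination inequality holds on all of $\cF_0$, not only on some smaller set, so that it can be applied to $a^m$. That is guaranteed because the countable family generates the whole topology $\tau_{\cF_0}$, and the standard characterization of continuous seminorms on a locally convex space then gives the displayed bound.
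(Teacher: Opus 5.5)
Your proof is correct and follows essentially the same route as the paper. Both arguments take the countable generating family from Corollary \ref{cor:F0 frechet countable seminorms}, dominate $\|\cdot\|_y$ on all of $\cF_0$ by finitely many seminorms from that family, apply the bound to $a^m$, and take $m$-th roots. The only difference is cosmetic: you dominate by a maximum rather than a sum, so the $m$-th root step gives the constant $C^{1/m}$ directly. The paper instead needs the subadditivity inequality $(x_1+\cdots+x_N)^{1/m}\le x_1^{1/m}+\cdots+x_N^{1/m}$.
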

\begin{proof}
If $\cF_0$ is Fr\'echet, then by Corollary \ref{cor:F0 frechet countable seminorms} its topology is generated by a countable family $(\|\cdot\|_{y_k})_{k\in\bN}$.  For every $y\in Y$ there are indices $k_1,\ldots,k_N$ and a constant $C>0$ such that
$\|f\|_y\le C\sum_{j=1}^N\|f\|_{y_{k_j}}$ for all $f\in\cF_0$. Applying this inequality to $f=a^m$, with $a\in\cF_M$, gives
\[
\|a\|_{y,m}^m=\|a^m\|_y
\le C\sum_{j=1}^N\|a^m\|_{y_{k_j}}
=C\sum_{j=1}^N\|a\|_{y_{k_j},m}^m.
\]
Taking $m$-th roots and using the elementary inequality $(x_1+\cdots+x_N)^{1/m}\le x_1^{1/m}+\cdots+x_N^{1/m}$ gives a constant $C_m>0$ such that
\[
\|a\|_{y,m}\le C_m\sum_{j=1}^N\|a\|_{y_{k_j},m}.
\]
Thus the topology of $\cF_M$ is generated by the countable family
$(\|\cdot\|_{y_k,m})_{k,m\in\bN}$.
\end{proof}

\subsection{\texorpdfstring{On the inclusions $\cF_\infty\subset\cF_M\subset\cF_0$}{On the inclusion Finfty subset FM subset F0}}
\label{sec:inclusions F FM F0}

An interesting question, which is closely related to the extension of the product $\odot$ in $\cA_\infty$ (see Corollary \ref{cor:extension odot}) is whether the inclusions $\cF_\infty\subset \cF_M$ and $\cF_M\subset \cF_0$ are strict.

We begin by studying the inclusion $\cF_M\subset \cF_0$. We were not able to find a complete characterization of the equality $\cF_0=\cF_M$ in terms of the measures $\mu_y$, $y\in Y$, and we leave this as an open question. Nevertheless, we present several partial results that cover many important examples and shed some light on the underlying phenomena.

\begin{prop}
\label{prop:atomicity characterization}
    The following statements are equivalent: 
    \begin{enumerate}[label=(\arabic*)]
        \item $\mu_y$ is an atomic measure for every $y\in Y$.
    
        \item $\mu_y|_U$ is a non zero atomic measure for some $y\in Y$ and some open $U\subset\Omega$.

        \item The dual Haar measure $\widehat{\nu}$ is atomic.

        \item The dual group $\widehat{G}$ is discrete.

        \item The group $G$ is compact.
    \end{enumerate}
\end{prop}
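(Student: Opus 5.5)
The plan is to close the cycle $(5)\Leftrightarrow(4)\Leftrightarrow(3)$ using standard Pontryagin duality and Haar measure facts, then prove $(3)\Rightarrow(1)\Rightarrow(2)\Rightarrow(3)$.

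The equivalence $(4)\Leftrightarrow(5)$ is Pontryagin duality: $G$ is compact iff $\widehat G$ is discrete. For $(3)\Leftrightarrow(4)$, I will use that a Haar measure on a locally compact group either has an atom or has none. If $\widehat\nu(\{\xi_0\})>0$, translation invariance gives every singleton the same positive mass. Since $\widehat\nu$ is finite on compact sets, every compact set is then finite. Local compactness gives a compact neighbourhood of $0$, which is therefore finite, so $\{0\}$ is open and $\widehat G$ is discrete. Conversely, if $\widehat G$ is discrete, Haar measure is a multiple of counting measure, hence atomic. In the same way, if $\widehat G$ is not discrete, then every singleton is $\widehat\nu$-null. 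I will use this form of the dichotomy below.

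For $(3)\Rightarrow(1)$: $\mu_y$ is absolutely continuous with respect to $\widehat\nu$, with density $|q_\bullet(y)|^2$. When $\widehat\nu$ is counting measure on the countable set $\widehat G$ (countable because $G$ is $\sigma$-compact and metrizable), $\mu_y=\sum_\xi|q_\xi(y)|^2\delta_\xi$ is atomic. For $(1)\Rightarrow(2)$: $\mu_y\neq0$ for some $y$, because $\Omega$ is nonempty (otherwise $H=0$, a degenerate case I would exclude explicitly). Taking $U=\Omega$ then gives (2).

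The main obstacle is $(2)\Rightarrow(3)$. I will argue by contradiction: assume $\widehat G$ is not discrete, so singletons are $\widehat\nu$-null. Since $\mu_y\ll\widehat\nu$, every singleton is also $\mu_y$-null. An atom $A$ of $\mu_y|_U$ is a set with $\mu_y(A)>0$ on which $\mu_y$ takes only the values $0$ and $\mu_y(A)$. I will use second countability of $\widehat G$ (dual of a $\sigma$-compact metrizable group) and the fact that $\mu_y$ is a finite Borel measure on a Polish-type space, hence regular. With these, the usual bisection argument with a countable base shows that such a measure restricted to an atom is concentrated at a single point. Concretely, I would cover $\widehat G$ by countably many sets of diameter less than $1/n$ and pick, for each $n$, the one carrying full mass of $A$; intersecting them yields a point $\xi_0$ with $\mu_y(\{\xi_0\})=\mu_y(A)>0$. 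This contradicts $\widehat\nu(\{\xi_0\})=0$. Hence $\widehat G$ is discrete, which is (4) and so also (3). The continuity of $|q_\bullet(y)|$ from Corollary \ref{cor:|q| continuous in xi} is not strictly needed, but it makes it transparent that $\mu_y$ charges nonempty open subsets of $U_y$.
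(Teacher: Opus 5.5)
Your proposal is correct, but the key step $(2)\Rightarrow(3)$ takes a different route from the paper.

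\textbf{The paper's route.} It restricts to the open set $U_y\cap U$. There $|q_\bullet(y)|>0$, so $\mu_y$ and $\widehat\nu$ are mutually absolutely continuous. This transfers atomicity from $\mu_y|_U$ to $\widehat\nu|_{U_y\cap U}$, and translation invariance spreads it to all of $\widehat\nu$. The paper then simply asserts that an atom of Haar measure gives a singleton of positive mass.

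\textbf{Your route.} You argue contrapositively. If $\widehat G$ is not discrete, all singletons are $\widehat\nu$-null, by your finiteness argument on a compact neighbourhood. They are then $\mu_y$-null using only $\mu_y\ll\widehat\nu$. Since $\widehat G$ is separable metrizable, any atom of the finite measure $\mu_y$ is a point mass, which gives the contradiction.

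\textbf{Comparison.} Your route never uses openness of $U$, continuity of $|q_\bullet(y)|$, or the equivalence of measures on $U_y\cap U$. So (2) could even be weakened to ``some $\mu_y$ has an atom''. It also proves the atom-to-point reduction that the paper leaves implicit. The paper's argument is shorter and stays with $\widehat\nu$ directly.

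\textbf{One bookkeeping point.} Suppose ``atomic'' is read in the abstract sense: a set of positive measure all of whose subsets have measure $0$ or full. Then your $(3)\Rightarrow(4)$ step, which starts from $\widehat\nu(\{\xi_0\})>0$, needs the same point-mass reduction. You would apply it to a $\widehat\nu$-atom, which has finite measure because $\widehat\nu$ is $\sigma$-finite. Your $(3)\Rightarrow(1)$ also routes through discreteness. The cleanest fix is to prove $(3)\Rightarrow(1)$ directly, as the paper does. If $\mu_y(E)>0$, any $\widehat\nu$-atom inside $E\cap U_y$ is a $\mu_y$-atom. The cycle $(3)\Rightarrow(1)\Rightarrow(2)\Rightarrow(4)\Rightarrow(3)$ then closes without needing the reduction for $\widehat\nu$.
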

\begin{proof}   
    Note first that the equivalence of (3), (4) and (5) follows easily from the fact that the Haar measure is a Radon measure and from Pontryagin's duality.
    
    On the other hand, since $\mu_y\ll \widehat{\nu}$, (3) implies (1), and (1) trivially implies (2).    
    
    Therefore, we only need to prove that (2) implies (3). Let thus $y\in Y$ and $U\subset \Omega$ open be such that $\mu_y|_U$ is atomic and non-zero.
    Since the measure $\mu_y|_U$ is non-zero, we have $U_y\cap U\neq\emptyset$, and since $q_\bullet(y)\neq 0$ on $U_y\cap U$, the measures    $\widehat{\nu}|_{U_y\cap U}$ and $\mu_y|_{U_y\cap U}$ are equivalent.
    
    Since $\mu_y|_U$ is atomic, then $\mu_y|_{U_y\cap U}$ is also atomic and, therefore, $\widehat{\nu}|_{U_y\cap U}$ is atomic as well.
    Finally, the invariance of the Haar measure implies that $\widehat{\nu}$ is itself atomic.
    If a Haar measure on a locally compact group has an atom, then the singleton containing that atom has positive measure; by translation invariance every singleton has the same positive measure, hence the group is discrete.
\end{proof}

\begin{thm}
\label{thm:particular characterization A0=AM}
Suppose that $(\cF_0,\tau_{\cF_0})$ is a Fréchet space.
Then the condition $\cF_M=\cF_0$ implies that $G$ is compact.
\end{thm}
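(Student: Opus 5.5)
We argue by contraposition: assuming $\cF_0$ is Fréchet and $G$ is not compact, we construct $a\in\cF_0\setminus\cF_M$. By Corollary \ref{cor:F0 frechet countable seminorms}, the topology of $\cF_0$ is generated by countably many seminorms $\|\cdot\|_{y_k}$, $k\in\bN$. In particular, for every $y\in Y$ there are $k_1,\dots,k_N$ and $C>0$ with $\|f\|_y\le C\sum_j\|f\|_{y_{k_j}}$, so $\cF_0=\bigcap_k L^2(\Omega,\mu_{y_k})$. We then replace the family by the single finite measure
\[
\mu=\sum_k 2^{-k}\frac{\mu_{y_k}}{1+\mu_{y_k}(\Omega)}.
\]
This gives $\cF_0\supseteq L^2(\Omega,w\,d\mu)$ for a suitable weight, or more simply we work with the countable family directly. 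The aim is a function $a$ with $\int |a|^2d\mu_{y_k}<\infty$ for all $k$ but $\int|a|^{2m}d\mu_{y}=\infty$ for some $y$ and $m$.

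Since $G$ is not compact, Proposition \ref{prop:atomicity characterization} shows that $\widehat G$ is not discrete and that no $\mu_y$ has a nonzero atomic part on any open set. Fix $y=y_1$ with $\mu_{y_1}\neq0$; we may take $y_1\in Y_0$. By Corollary \ref{cor:|q| continuous in xi}, $U_{y_1}$ is open and nonempty, and on a small compact neighborhood $V\subset U_{y_1}$ of a point, $|q_\bullet(y_1)|^2\ge c>0$. So $\mu_{y_1}$ is comparable to $\widehat\nu$ on $V$, and $\widehat\nu(V)>0$. Since $\widehat\nu$ is non-atomic, we pick disjoint measurable sets $A_n\subset V$ with $\widehat\nu(A_n)>0$ and $\widehat\nu(A_n)\to0$ rapidly. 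We set $a=\sum_n c_n\mathbf 1_{A_n}$ and choose the heights $c_n$ to satisfy two requirements:
\[
\sum_n c_n^2\,\mu_{y_k}(A_n)<\infty\ \text{for all }k,
\qquad
\sum_n c_n^{4}\,\mu_{y_1}(A_n)=\infty.
\]
A diagonal choice achieves this. The first condition only needs $c_n^2\,\mu_{y_k}(A_n)\le 2^{-n}$ for $k\le n$, with the finitely many remaining terms finite because the $\mu_{y_k}$ are finite. This holds if $c_n^2 \max_{k\le n}\mu_{y_k}(A_n)\le 2^{-n}$. For the second condition we want $c_n^4\mu_{y_1}(A_n)\ge1$. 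It therefore suffices to take $c_n^2=2^{-n}/\max_{k\le n}\mu_{y_k}(A_n)$ and to require
\[
\frac{2^{-2n}\mu_{y_1}(A_n)}{\max_{k\le n}\mu_{y_k}(A_n)^2}\ge 1.
\]
Because $\mu_{y_k}(A_n)\le C_k\widehat\nu(A_n)$ on $V$, where $C_k=\sup_V|q_\bullet(y_k)|^2<\infty$ by continuity and compactness, and $\mu_{y_1}(A_n)\ge c\,\widehat\nu(A_n)$, the left side is at least
\[
\frac{c\,2^{-2n}}{(\max_{k\le n}C_k)^2\,\widehat\nu(A_n)}.
\]
Choosing $\widehat\nu(A_n)$ small enough makes this at least $1$. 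Then $a\in\cF_0$, since every seminorm is dominated by finitely many $\|\cdot\|_{y_k}$, while $\|a^2\|_{y_1}=\infty$, so $a\notin\cF_M$.

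The main obstacle is the bound $C_k<\infty$, i.e.\ that $|q_\bullet(y_k)|^2$ is locally bounded near the chosen point; continuity from Corollary \ref{cor:|q| continuous in xi} resolves this. A second point is that we need non-discreteness of $\widehat\nu$ on $V$ itself, so that the $A_n$ with arbitrarily small positive measure exist. This follows from Proposition \ref{prop:atomicity characterization} (2)$\Rightarrow$(5), or directly from non-atomicity of Haar measure on a non-discrete group. The Fréchet hypothesis is used exactly to reduce the conditions to countably many, which makes the diagonal choice possible.
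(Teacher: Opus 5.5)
Your proof is correct in substance. It proves the contrapositive by a route genuinely different from the paper's.

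\textbf{The paper's route.} The paper assumes $\cF_M=\cF_0$ and applies the closed graph theorem to the inclusion $\cF_0\hookrightarrow L^{2m}(\Omega,\mu_{y_0})$. This gives the estimate $\|f\|_{y_0,m}\le C_1(\|f\|_{y_1}+\cdots+\|f\|_{y_L})$. The paper localizes it to a compact $K$ on which the weights $|q_\bullet(y_\ell)|$ are controlled, then tests it on indicators $\chi_E$, $E\subset K$. This yields $\widehat\nu(E)\ge C_3$ for every $E\subset K$ of positive measure, which forces an atom and hence compactness of $G$.

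\textbf{Your route.} You build an explicit witness $a=\sum_n c_n\mathbf 1_{A_n}$ on a compact $V\subset U_{y_1}$, shrinking the $A_n$ by diffuseness of Haar measure, so that $\|a\|_{y_k}<\infty$ for all $k$ but $\|a^2\|_{y_1}=\infty$.

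\textbf{Comparison.} Both arguments use the Fr\'echet hypothesis only to reduce to countably many seminorms. Both rely on Corollary \ref{cor:|q| continuous in xi} for local bounds and on the absence of atoms for Haar measure on a non-discrete group. Your version avoids the closed graph theorem and Corollary \ref{cor:F0 frechet then FM frechet}. It is constructive and shows the obstruction is local: the witness can be supported in any small compact neighborhood, and already $a^2\notin\cF_0$. The paper's version lets the closed graph theorem perform the diagonalization. It yields a clean quantitative embedding inequality without bookkeeping of the sets $A_n$ and heights $c_n$.

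\textbf{A step that needs repair.} The domination $\|f\|_y\le C\sum_j\|f\|_{y_{k_j}}$ is only known for $f\in\cF_0$. So deducing $a\in\cF_0$ from it is circular as written, and so is your identity $\cF_0=\bigcap_k L^2(\Omega,\mu_{y_k})$. Apply the inequality to the bounded partial sums $a_N=\sum_{n\le N}c_n\mathbf 1_{A_n}\in L^\infty(\Omega)\subset\cF_0$ and let $N\to\infty$ by monotone convergence.

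\textbf{A citation to fix.} Proposition \ref{prop:atomicity characterization}(2) concerns a measure that is atomic on an open set. It does not by itself give subsets of $V$ with arbitrarily small positive measure. Your alternative justification, that Haar measure on a non-discrete group has no atoms, is the one you need.
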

\begin{proof}
We reason similarly to \cite[Theorem 1]{Villani1985}.
By Corollary \ref{cor:F0 frechet then FM frechet}, the algebra $\cF_M$ is a Fréchet $*$-algebra.
Let $m\in\bN$ with $m>1$ and fix $y_0\in Y$ such that $\mu_{y_0}\neq0$. Then $\cF_M=\cF_0$ implies the set-theoretic inclusion
$\cF_0\subset L^{2m}(\Omega,\mu_{y_0})$.

By passing to an almost everywhere convergent subsequence of an $L^2$-convergent sequence, one readily checks that the inclusion has a closed graph (note that the Fréchet assumption allows us to argue sequentially) and by the closed graph theorem for Fréchet spaces (see, for example, \cite[Theorem 2.15]{rudin1991functional}), we conclude that the inclusion is continuous.

Hence, there are $y_1,\ldots,y_L\in Y$ and a constant $C_1>0$ such that
\begin{equation}
\label{eq:continuous inclusion L2 in L2m inequality}
\|f\|_{y_0,m}\leq C_1(\|f\|_{y_1}+\cdots+\|f\|_{y_L}),
\end{equation}
for all $f\in \cF_0$. 
Note that $\mu_{y_0}(U_{y_\ell})>0$ for at least one $y_\ell$ with $\ell\in\{1,\ldots,L\}$. Indeed, otherwise letting
\[
A=
\Omega\backslash\left[\bigcup_{\ell=1}^LU_{y_\ell}\right],
\]
one would have $\|\chi_A\|_{y_0,m}>0$ but $\|\chi_A\|_{y_\ell}=0$, $\ell=1,\ldots,L$, which contradicts \eqref{eq:continuous inclusion L2 in L2m inequality}.
Fix thus some $\ell'\in\{1,\ldots,L\}$ with $\mu_{y_0}(U_{y_{\ell'}})>0$. 

Let $K\subset U_{y_0}\cap U_{y_{\ell'}}$ be any compact set with $\widehat{\nu}(K)>0$ (which exists by regularity of the Haar measure). By Corollary \ref{cor:|q| continuous in xi}, the functions $|q_\bullet(y_\ell)|$
are continuous for all $\ell=1,\ldots,L$ and, by construction, $|q_\bullet(y_0)|$ and $|q_\bullet(y_{\ell'})|$ are strictly positive on $U_{y_0}\cap U_{y_{\ell'}}$.

Thus there are constants $A_\ell,B_\ell\geq 0$, $\ell=0,1,\ldots,L$ such that 
\[
A_0,A_{\ell'},B_0,B_{\ell'}>0
\]
and
\[
A_\ell\leq |q_\xi(y_\ell)|\leq B_\ell,\qquad\forall \xi\in K,\ \ell=0,1,\ldots,L.
\]
Apply \eqref{eq:continuous inclusion L2 in L2m inequality} to functions $f\in\cF_0$ supported in $K$. From the preceding estimates we obtain a constant $C_2>0$ such that
\[
\left(
\int_{K}|f(\xi)|^{2m}d\widehat{\nu}(\xi)\right)^{\frac{1}{2m}}
\leq C_2
\left(
\int_{K}|f(\xi)|^2d\widehat{\nu}(\xi)
\right)^{\frac{1}{2}}.
\]
Since $\chi_{E}\in \cF_\infty\subset \bigcap_{y\in Y} L^2(\Omega,\mu_y)$ for every measurable set $E\subset K$ with positive measure $\widehat{\nu}(E)>0$, this implies that for such a set
\[
\widehat{\nu}(E)^{\frac{1}{2m}}\leq C_2\widehat{\nu}(E)^{\frac{1}{2}}.
\]
Therefore,
\[
\widehat{\nu}(E)\geq C_3,
\]
for some constant $C_3>0$ and every measurable $E\subset K$ with positive measure.
It follows from this that $\widehat{\nu}|_K$ is atomic.
Similarly to Proposition \ref{prop:atomicity characterization}, it follows that $\widehat\nu$ is atomic as well and, hence, that $G$ is compact.
\end{proof}

Surprisingly, there are non-trivial examples with $G$ compact for both cases $\cF_M\subsetneq\cF_0$ and $\cF_M=\cF_0$ (see Section \ref{sec:examples}).

We turn now to the analysis of the equality $\cF_\infty=\cF_M$.
\begin{prop}
\label{prop:F=F0 iff F=FM}
It holds $\cF_\infty=\cF_0$ if and only if $\cF_\infty=\cF_M$.
\end{prop}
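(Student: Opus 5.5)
One direction is immediate from the chain $\cF_\infty\subset\cF_M\subset\cF_0$. If $\cF_\infty=\cF_0$, then $\cF_M$ is squeezed between two equal spaces, so $\cF_\infty=\cF_M$. The work lies in the converse, which I would prove in contrapositive form: assuming $\cF_\infty\subsetneq\cF_0$, I would produce an unbounded element of $\cF_M$.

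Step 1 (localize the unboundedness). Take $b\in\cF_0\setminus L^\infty(\Omega)$. Since $b$ is not essentially bounded, for every $n\in\bN$ the set $\{|b|>n\}$ has positive $\widehat\nu$-measure. Replacing $b$ by $|b|$ keeps it in $\cF_0$, by Proposition \ref{prop:characterization F0}, so we may take $b\ge 0$.

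Step 2 (build a slowly growing unbounded function). The plan is to replace $b$ by a much smaller function that is still unbounded but has all powers integrable against every $\mu_y$. Define
\[
c=\log(1+b).
\]
For each $m$ there is a constant $C_m$ with $\log(1+t)^m\le C_m(1+t)$ for all $t\ge 0$. Hence
\[
|c|^{2m}\le C_m^2(1+b)^2\le 2C_m^2(1+b^2).
\]
Both $1$ and $b$ lie in $\cF_0$, and each $\mu_y$ is finite. It follows that $\|c^m\|_y<\infty$ for every $y\in Y$ and every $m\in\bN$. By Proposition \ref{prop:characterization FM}, $c\in\cF_M$.

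Step 3 (unboundedness of $c$). On $\{b>e^n\}$ we have $c>n$, and this set has positive $\widehat\nu$-measure. So $c\notin L^\infty(\Omega)$, which gives $\cF_\infty\subsetneq\cF_M$ and proves the contrapositive.

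The main point to check is measurability and a.e. finiteness of $c$, which follow because $b$ is measurable and a.e. finite. No topological structure is needed; the argument uses only the lattice property of $\cF_0$ (if $|f|\le|g|$ with $g\in\cF_0$, then $f\in\cF_0$) together with finiteness of the measures $\mu_y$. Any function $\phi$ that grows slower than every power but is unbounded would work equally well in place of $\log(1+\cdot)$.
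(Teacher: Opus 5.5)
Your proposal is correct and essentially coincides with the paper's proof: the easy direction follows from $\cF_\infty\subset\cF_M\subset\cF_0$, and for the converse you take an unbounded $b\in\cF_0$ and show $\log(1+|b|)\in\cF_M\setminus\cF_\infty$ via the bound $(\log(1+t))^{2m}\le C_m'(1+t^2)$ together with finiteness of the measures $\mu_y$. You also make explicit two details the paper leaves implicit, namely that $1\in\cF_0$ because $\mu_y(\Omega)<\infty$, and why $\log(1+|b|)$ is still essentially unbounded.
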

\begin{proof}
    We only need to prove the sufficiency. Suppose that $\cF_\infty\neq \cF_0$.    
    This means that there is a function
    \[
    a\in \bigcap_{y\in Y}L^2(\Omega,\mu_y)\backslash L^\infty(\Omega).
    \]
    Since $a$ is unbounded, the function $\log(|a|+1)$ is also unbounded and for some constants $C_m>0$ one has
    \[
    (\log(|a|+1))^{2m}\leq C_m(1+|a|^2),\qquad \forall m\in \bN,
    \]
    This implies that
    $\log(|a|+1)\in 
    \cF_M
    \backslash \cF_\infty$.
\end{proof}

\begin{cor}
\label{cor:A=AM implies compactness}
Suppose that $(\cF_0,\tau_{\cF_0})$ is a Fréchet space. Then the condition $\cF_\infty=\cF_M$ implies that $G$ is compact.
\end{cor}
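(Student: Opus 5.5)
The corollary should follow by combining Proposition \ref{prop:F=F0 iff F=FM} with Theorem \ref{thm:particular characterization A0=AM}. Assume $(\cF_0,\tau_{\cF_0})$ is Fréchet and $\cF_\infty=\cF_M$.

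By Proposition \ref{prop:F=F0 iff F=FM}, the equality $\cF_\infty=\cF_M$ forces $\cF_\infty=\cF_0$. Combined with the chain $\cF_\infty\subset\cF_M\subset\cF_0$, all three spaces coincide. In particular $\cF_M=\cF_0$. Since $\cF_0$ is Fréchet by hypothesis, Theorem \ref{thm:particular characterization A0=AM} applies directly and gives that $G$ is compact.

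There is no real obstacle here. The only point to check is that the Fréchet hypothesis needed in Theorem \ref{thm:particular characterization A0=AM} is exactly the one assumed. Nothing else about the topologies enters, because the equality $\cF_M=\cF_0$ is a purely set-theoretic consequence of Proposition \ref{prop:F=F0 iff F=FM}.

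One could alternatively argue directly: $\cF_\infty=\cF_0$ means $\bigcap_{y}L^2(\Omega,\mu_y)\subset L^\infty(\Omega)$. A closed-graph argument as in the proof of Theorem \ref{thm:particular characterization A0=AM} would give $\|f\|_\infty\le C\sum_\ell\|f\|_{y_\ell}$, and testing on $\chi_E$ for $E\subset K$ would force $\widehat\nu|_K$ to be atomic. This is unnecessary given the earlier results, so I would present the short two-step reduction.
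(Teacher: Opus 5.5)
Your proposal is correct and is exactly the paper's argument: Proposition~\ref{prop:F=F0 iff F=FM} upgrades $\cF_\infty=\cF_M$ to $\cF_\infty=\cF_0$. Hence $\cF_M=\cF_0$, and Theorem~\ref{thm:particular characterization A0=AM} gives compactness of $G$ under the Fr\'echet hypothesis.
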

\begin{proof}
    By Proposition \ref{prop:F=F0 iff F=FM}, the condition $\cF_\infty=\cF_M$ implies $\cF_\infty=\cF_0$, whence $\cF_M=\cF_0$. Then Theorem \ref{thm:particular characterization A0=AM} implies that $G$ is compact.
\end{proof}

\begin{cor}
\label{cor:G not compact both inclusions strict}
Suppose that $(\cF_0,\tau_{\cF_0})$ is a Fréchet space.
    If $G$ is not compact, then both inclusions $\cF_\infty\subset \cF_M$ and $\cF_M\subset \cF_0$ are strict.
\end{cor}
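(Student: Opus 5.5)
This follows by contraposition from Theorem \ref{thm:particular characterization A0=AM} and Corollary \ref{cor:A=AM implies compactness}. Assume $(\cF_0,\tau_{\cF_0})$ is Fréchet and $G$ is not compact. We know $\cF_\infty\subset\cF_M\subset\cF_0$, so strictness of each inclusion amounts to showing that the corresponding equality fails.

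For the inclusion $\cF_M\subset\cF_0$, we argue by contradiction. If $\cF_M=\cF_0$, then Theorem \ref{thm:particular characterization A0=AM}, whose hypothesis is exactly the Fréchet property of $\cF_0$, forces $G$ to be compact, contradicting our assumption. Hence $\cF_M\subsetneq\cF_0$.

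For the inclusion $\cF_\infty\subset\cF_M$, suppose instead that $\cF_\infty=\cF_M$. Corollary \ref{cor:A=AM implies compactness} then gives that $G$ is compact, again a contradiction. One can also see this directly: by Proposition \ref{prop:F=F0 iff F=FM}, $\cF_\infty=\cF_M$ implies $\cF_\infty=\cF_0$, hence $\cF_M=\cF_0$, which the previous paragraph excludes. Hence $\cF_\infty\subsetneq\cF_M$.

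No step here is a real obstacle, since all the analytic content sits in Theorem \ref{thm:particular characterization A0=AM}, namely the closed graph argument and the atomicity of $\widehat\nu$. The only thing to check is that the Fréchet hypothesis is carried into both invoked results, and it is.
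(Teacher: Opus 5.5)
Your proof is correct and is the paper's argument: the paper also deduces both strict inclusions by contraposition, using Theorem~\ref{thm:particular characterization A0=AM} for $\cF_M\subsetneq\cF_0$ and Corollary~\ref{cor:A=AM implies compactness} for $\cF_\infty\subsetneq\cF_M$. Your alternative route for the second inclusion through Proposition~\ref{prop:F=F0 iff F=FM} just unpacks the proof of that corollary.
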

\begin{proof}
It follows from Theorem \ref{thm:particular characterization A0=AM} and Corollary \ref{cor:A=AM implies compactness}.
\end{proof}

\begin{rem}
    The equality $\cF_\infty=\cF_0$ seems to be false for most cases (see Section \ref{sec:examples}). Nevertheless, we want to note that, at the same time, the equality depends on how strong the restriction imposed by the intersection along $y\in Y$ is.
    To illustrate this, we consider two extreme cases.
    For both examples, we suppose that $\widehat{G}=\bZ$ with $\widehat{\nu}$ the counting measure and that $\Omega=\bN$.
    \begin{itemize}
        \item Suppose first that there is only one point $Y=\{y_0\}$ and thus $\Omega=U_y=\bN$.
        Let $q_\bullet(y_0)=(g_k)_{k\in\bN}$ be any sequence in $\ell^2(\bN)$. Then, there is a subsequence $(k_j)_{j\in \bN}$ such that $|g_{k_j}|^2<1/3^j$ for all $j\in \bN$.
        
        Let $a=(a_k)_{k\in\bN}$ be defined by
        \[
        a_k=
        \begin{cases}
            2^{j/2},&\qquad\text{if }k=k_j,\\
            0,&\qquad \text{otherwise}.
        \end{cases}
        \]
        Then $a$ is unbounded but
        \[
        \|a\|_{y_0}^2=
        \sum_{k=1}^\infty
        |a_kg_k|^2
        <
        \sum_{j=1}^\infty
        \left(\frac{2}{3}\right)^j<\infty,
        \]
        which means that $a\in \cF_0\backslash \cF_\infty$. By Proposition \ref{prop:F=F0 iff F=FM}, we conclude that $\cF_\infty\neq\cF_M$.

        \item On the other hand, let $Y=\ell^2(\bN)$ and $q_\bullet(y)=y$ for every $y\in Y$.
        Let $a\in \bigcap_{y\in Y}L^2(\Omega,\mu_y)$. This means that, for every sequence $(g_k)\in \ell^2(\bN)$ one has $(a_kg_k)\in \ell^2(\bN)$.
        
        Therefore, the multiplication operator $M_a$ with symbol $a$ is defined on all of $\ell^2(\bN)$. This multiplication operator is closed; hence the closed graph theorem implies that $M_a$ is bounded. Consequently $a\in L^\infty(\bN)$, and therefore \[ \cF_\infty=\cF_M=\cF_0.\]
        This example is admittedly somewhat artificial, since $Y=\ell^2(\bN)$ is not a $\sigma$-finite measure space and the Fréchet assumption is probably false.
        Nevertheless, it illustrates that intersecting ``too many'' spaces of the form $L^2(\Omega,\mu_y)$ forces the equality $\cF_\infty=\cF_M=\cF_0$.
    \end{itemize}
\end{rem}

\section{Topological algebras of operators and integral kernels}
\label{sec:algebras A S}

In this section we carry over the results from the previous section to corresponding algebras of operators and to algebras of kernel functions on $H$.

\subsection{The operator side}

Consider the space $\cS_0$. In Corollary \ref{cor:equiv S0 A0 F0} we showed that it is exactly the space of all closable translation-invariant operators defined on $\cD_0$.
Moreover, given $S\in\cS_0$ one has $RSR^*=M_b$ for a unique (up to a zero measure set) $b\in\cF_0$.

Since the operator $R\colon H\to L^2(\Omega,d\widehat{\nu})$ is unitary, for every $y\in Y$ one has
\begin{align*}
    \|S K_{0,y}\|_H
    &=\|M_{b}q_{\bullet}(y)\|_{L^2(\Omega)}
    =\|b \|_{y}.
\end{align*}
Thus, we endow $\mathcal S_0$ with the initial topology $\tau_{\mathcal S_0}$ generated by the seminorms
\[
S\longmapsto \|S K_{0,y}\|_H,
\qquad y\in Y.
\]

Let now $\cS_M$ be the subspace of $\cS_0$ consisting of all operators $S_{\psi_a}$ with $a\in \mathcal F_M$. 
If $S\in\mathcal S_0$ has symbol $a\in\mathcal F_0$, then
$S\in\mathcal S_M$ if and only if $a^m\in\mathcal F_0$ for every
$m\in\mathbb N$, equivalently if and only if
\[
\|R^*M_{a^m}RK_{0,y}\|_H<\infty
\]
for all $m\in\mathbb N$ and $y\in Y$.

We define a product on $\cS_M$ by transporting the product from $\cF_M$. That is, we set
\begin{equation}
    \label{eq:product rule SM}
    S_1\star S_2 \coloneqq R^*M_{a_1a_2} R,
\end{equation}
whenever $a_1,a_2\in \cF_M$ with $S_1=R^*M_{a_1}R,S_2=R^*M_{a_2}R\in \cS_0$. By construction, the resulting operator is again in $\cS_M$.
Note that this product essentially coincides with the usual operator composition when both operators are bounded.

We endow $\mathcal S_M$ with the initial topology $\tau_{\cS_M}$ generated by the seminorms
\[
S\longmapsto \|S^{\star m}K_{0,y}\|^{1/m}_H,\qquad y\in Y,\ m\in\bN,
\]
where $S^{1\star}=S$ and $S^{(m+1)\star}=S^{m\star}\star S$, $m\in\bN$.
Furthermore, following \eqref{eq:def invol in F}, we endow $\cS_M$ with the involution $\phantom{\cdot}^\dagger$ defined by
\begin{equation}
    \label{eq:def invol in S}
    S^{\dagger}\coloneqq S^*|_{\cD_0},\qquad S\in \cS_M,
\end{equation}
where $*$ denotes the usual adjoint for unbounded operators. This restriction is well-defined. Indeed, if $S=R^*M_aR|_{\cD_0}$ with $a\in\cF_M$, then $\overline a\in\cF_M$ and $R^*M_{\overline a}R|_{\cD_0}$ is the formal adjoint of $S$ on $\cD_0$. Hence $\cD_0\subset\operatorname{Dom}(S^*)$ and
\[
S^*|_{\cD_0}=R^*M_{\overline a}R|_{\cD_0}.
\]

In general, the product $\star$ should not be confused with the usual composition
of unbounded operators on $\mathcal D_0$, since an operator in
$\mathcal S_M$ need not map $\mathcal D_0$ into itself.
However, while we chose the product $\star$ for transparency, we want to mention that $\cS_M$ does have a realization as an operator algebra under composition, as the following remark shows.

\begin{rem}
\label{rem:S0 is Ostar algebra}
    Recall (see, for example, \cite{Schmudgen1990,Lassner1972}) that an $O^*$-algebra is a family $\mathfrak A$ of closable unbounded operators defined on a common domain $\cD$ such that 
    \begin{enumerate}[label=(\arabic*)]
        \item $I\in \mathfrak A$.
        \item $T\cD\subset \cD$ and $TS\in \mathfrak A$, for all $T,S\in\mathfrak A$.
        \item $\cD\subset \operatorname{Dom}(T^*)$ and $T^*|_{\cD}\in \mathfrak A$ for all $T\in \mathfrak A$.
    \end{enumerate}
    
    Given $a\in \cF_M$ consider its multiplication operator $M_a^{\text{max}}$ with maximal domain
    $\operatorname{Dom}(M_a^{\text{max}})
    =
    \{f\in L^2(\Omega,\widehat{\nu})\colon af\in L^2(\Omega,\widehat{\nu})\}.$
    Set
    \[
    \cD\coloneqq\bigcap_{a\in\cF_M}\operatorname{Dom}(M_a^{\text{max}})
    \]
    and let $\mathfrak A$ be the set of all operators $M_a^{\text{max}}|_{\cD}$, $a\in \cF_M$.
    Note that $\cD$ is a dense subspace of $L^2(\Omega,\widehat{\nu})$ since $R(\cD_0)\subset\cD$.
    We show that $\mathfrak A$ is indeed an $O^*$-algebra isomorphic to $\cS_M$.
    
    Since $I=R^*M_1R$ and $1\in \cF_\infty$, one has $I\in \mathfrak A$.
    Moreover, $(M_a^{\text{max}})^*=M_{\overline{a}}^{\text{max}}$, from which the third condition is also fulfilled.
    As for $(2)$, let $f\in \cD$ and $b\in \cF_M$. Since $\cF_M$ is an algebra one has $ab\in\cF_M$. Thus $f\in \operatorname{Dom}(M_{ab}^{\text{max}})$, which implies $bf\in \operatorname{Dom}(M_{a}^{\text{max}})$.

    Therefore, $\cS_M$ becomes an $O^*$-algebra by transporting the structure from $\mathfrak A$ through the isomorphism 
    \[
    S_\psi\mapsto RS_\psi R^*=M_{b_\psi}\mapsto 
    M_{b_\psi}^{\text{max}}.
    \]
    Note that the domain $\cD$ plays an important role in the structure of $\mathfrak A$ and hence in the structure of $\cS_M$, $\cF_M$ and $\cA_M$. It would be then interesting to give a characterization of it. We leave this as an open question.
\end{rem}

\begin{thm}
\label{thm:statements S0}
The following statements hold.
\begin{enumerate}[label=(\arabic*)]
    \item The space $(\cS_0,\tau_{\cS_0})$ is a complete topological vector space that is isomorphic to $(\cF_0,\tau_{\cF_0})$ via the map
    \[
    S_\psi\longmapsto b_\psi.
    \]

    \item The space $(\cS_M,\tau_{\cS_M})$ is a commutative complete topological $*$-algebra with respect to the operation $\star$ given by \eqref{eq:product rule SM} and the involution $\dagger$ given by \eqref{eq:def invol in S}.
    
    \item The isomorphism in $(1)$ restricts to an isomorphism of topological $*$-algebras making $\cS_M$ isomorphic to $\cF_M$.
    
    \item The product $\star$ extends the operator composition in $\cS_\infty$ and is the largest algebra in the sense that its associated space of symbols $\cF_M$ is the largest algebra in $\cF_0$.
    
    \item The following inclusions hold and are continuous:
    \[
    \cS_\infty\subset \cS_M\subset \cS_0.
    \]
    
    \item For every $S\in \cS_\infty$ one has
    \[
    \|S\|_{\operatorname{op}}=\sup_{y\in Y_0}\lim_{m\to\infty}\|S^{\star m}K_{0,y}\|_H^{1/m}.
    \]
\end{enumerate}
\end{thm}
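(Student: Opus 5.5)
The plan is to transport everything from the symbol side through the map $\Phi\colon \cS_0\to\cF_0$, $S_\psi\mapsto b_\psi$, so that each statement reduces to a result already proved for $\cF_0$, $\cF_M$ and $\cF_\infty$. The first step is to check that $\Phi$ is a linear bijection. Linearity follows from $S_{\alpha\psi_1+\beta\psi_2}=\alpha S_{\psi_1}+\beta S_{\psi_2}$. Uniqueness of $b$ up to null sets in Theorem \ref{thm:unbdd trans inv diagonalization} makes $\Phi$ well defined and injective. Surjectivity comes from Proposition \ref{prop:characterization F0}, $(3)\Rightarrow(1)$, together with Corollary \ref{cor:equiv S0 A0 F0}. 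The identity $\|SK_{0,y}\|_H=\|b\|_y$, obtained from the unitarity of $R$ and Proposition \ref{prop:RK=q} (note $|E_0\overline{q_\bullet(y)}|=|q_\bullet(y)|$), shows that $\Phi$ matches the defining seminorms of $\tau_{\cS_0}$ and $\tau_{\cF_0}$ one by one. Hence $\Phi$ is an isometric isomorphism for each seminorm, and completeness of $\cS_0$ is inherited from completeness of $\cF_0$. This proves (1).

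For (2) and (3), I restrict $\Phi$ to $\cS_M$. By definition its image is exactly $\cF_M$. By the definition \eqref{eq:product rule SM}, $\Phi(S_1\star S_2)=\Phi(S_1)\Phi(S_2)$. By induction this gives $\Phi(S^{\star m})=\Phi(S)^m$, and therefore
\[
\|S^{\star m}K_{0,y}\|_H^{1/m}=\|\Phi(S)^m\|_y^{1/m}=\|\Phi(S)\|_{y,m}.
\]
So $\Phi|_{\cS_M}$ again matches the generating seminorms exactly. For the involution, the discussion after \eqref{eq:def invol in S} shows $S^\dagger=R^*M_{\overline a}R|_{\cD_0}$, i.e.\ $\Phi(S^\dagger)=\overline{\Phi(S)}$. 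Thus $\Phi|_{\cS_M}$ is a topological $*$-algebra isomorphism onto $\cF_M$. Commutativity, completeness, continuity of the product and continuity of the involution then all transfer from the proposition establishing that $\cF_M$ is a complete locally convex topological $*$-algebra.

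For (4), let $S_1=S_\varphi$ and $S_2=S_\psi$ be in $\cS_\infty$, with bounded symbols $a_1,a_2$. Composition of the bounded extensions gives $R S_1S_2R^*=M_{a_1}M_{a_2}=M_{a_1a_2}$, which equals $S_1\star S_2$ on $\cD_0$. Maximality follows from the last assertion of Proposition \ref{prop:characterization FM}: any subspace of $\cS_0$ closed under a product transported from pointwise multiplication has symbol space a subalgebra of $\cF_0$, hence contained in $\cF_M$. For (5), the inclusions $\cF_\infty\subset\cF_M\subset\cF_0$ are continuous, and the $m=1$ seminorms of $\cF_M$ are those of $\cF_0$. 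The inclusion $\cS_\infty\subset\cS_M$ needs the operator norm on $\cS_\infty$ to correspond to the $L^\infty$ norm, and this holds because $\|M_a\|_{\mathrm{op}}=\|a\|_\infty$ with $R$ unitary. Finally (6) is the formula $\|f\|_\infty=\sup_{y\in Y_0}\lim_m\|f\|_{y,m}$, rewritten through the seminorm identity above together with $\|S\|_{\mathrm{op}}=\|b\|_\infty$.

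The main obstacle is the bookkeeping in (4) and (6): bounded elements of $\cS_0$ are identified with their bounded extensions, and $S^{\star m}$ has to be understood via symbols rather than via composition on $\cD_0$, since $\cD_0$ need not be invariant. I would handle this by working exclusively through $\Phi$ and invoking uniqueness of symbols each time. The phrase ``largest algebra'' in (4) also needs to be read precisely, and I would read it as the symbol-side maximality just described.
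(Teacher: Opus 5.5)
Your proposal is correct and follows the paper's proof essentially step for step. Both arguments transport everything through the symbol map $S\mapsto b$, use the identities $\|SK_{0,y}\|_H=\|b\|_y$ and $\|S^{\star m}K_{0,y}\|_H^{1/m}=\|b\|_{y,m}$ to see that the topologies are pullbacks, and read off (1)--(6) from the corresponding results for $\cF_\infty\subset\cF_M\subset\cF_0$ (with $\|S\|_{\operatorname{op}}=\|b\|_\infty$ for the bounded part).
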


\begin{proof}
Let $\Gamma:\cS_0\to\cF_0$ be the symbol map, $\Gamma(S)=b$ when $RSR^*=M_b$. Corollary \ref{cor:equiv S0 A0 F0} shows that $\Gamma$ is bijective. Moreover, for every $y\in Y$,
\[
\|S K_{0,y}\|_H=\|\Gamma(S)\|_y.
\]
Thus the topology $\tau_{\cS_0}$ is exactly the pullback of $\tau_{\cF_0}$ under $\Gamma$, and $(\cS_0,\tau_{\cS_0})$ is a complete topological vector space because $(\cF_0,\tau_{\cF_0})$ is complete.

By definition, $\cS_M=\Gamma^{-1}(\cF_M)$, and the seminorms on $\cS_M$ satisfy
\[
\|S^{\star m}K_{0,y}\|_H^{1/m}=\|\Gamma(S)\|_{y,m}.
\]
Also,
\[
\Gamma(S_1\star S_2)=\Gamma(S_1)\Gamma(S_2),
\qquad
\Gamma(S^\dagger)=\overline{\Gamma(S)}.
\]
Hence $\Gamma$ restricts to an isomorphism of topological $*$-algebras from $\cS_M$ onto $\cF_M$. Completeness, commutativity, continuity of multiplication and involution
follow from the corresponding statements for $\cF_M$.

If $S_1,S_2\in\cS_\infty$ have symbols $a_1,a_2\in L^\infty(\Omega)$, then
\[
R(S_1S_2)R^*=M_{a_1}M_{a_2}=M_{a_1a_2},
\]
so $S_1\star S_2=S_1S_2$. The maximality statement follows from Proposition \ref{prop:characterization FM}: every algebra contained in $\cF_0$ lies in $\cF_M$. The continuous inclusions and the norm formula are the pullbacks of the corresponding inclusions and formula for $\cF_\infty\subset\cF_M\subset\cF_0$.
\end{proof}

\subsection{The side of kernel functions}

We turn now to the space $\mathcal A_0$ and its corresponding subspaces. Recall that the map
\[
\mathcal A_0\longrightarrow \cS_0,
\qquad
\psi\longmapsto S_\psi
\]
is bijective with inverse given by
\[
S\longmapsto \psi_S,\qquad \psi_S(x,y,v)=(S K_{0,v})(x,y).
\]
Moreover,
\[
\|\psi(\cdot,\cdot,y)\|_H=\|S_\psi K_{0,y}\|_H.
\]
Hence, the natural topology in $\cA_0$ is the initial topology $\tau_{\cA_0}$ generated by the seminorms
\[
\psi\longmapsto \|\psi(\cdot,\cdot,y)\|_H,\qquad y\in Y.
\]
For notational clarity on the kernel side, we write $a_\psi$ for the symbol $b_\psi$ associated with $\psi$.

As expected from the bounded operator case, the formal product $\odot$ from \eqref{eq:def odot product}, when it belongs to $\cA_0$, corresponds to pointwise multiplication on the symbol side. More precisely, if $\varphi\odot\psi\in \cA_0$, then one can check that
\[
S_{\varphi\odot\psi}=R^*M_{a_{\varphi}a_\psi}R
=S_\varphi\star S_\psi,
\]
on the common domain $\cD_0$.

Consider now the space $\cA_M$ of all functions $\psi\in \cA_0$ such that $a_\psi\in\cF_M$. Equivalently, this is the space of all $\psi\in\cA_0$ such that $S_\psi\in \cS_M$.
For $\varphi,\psi\in\cA_M$ we define $\varphi\odot\psi$ to be the unique element of $\cA_0$ whose symbol is $a_\varphi a_\psi$. This transported product agrees with the formal product \eqref{eq:def odot product} whenever the latter defines an element of $\cA_0$.
Likewise, for $\psi\in\cA_M$ and $m\in\bN$, we define $\psi^{\odot m}$ as the unique kernel in $\cA_0$ whose symbol is $a_\psi^m$.
By Proposition \ref{prop:characterization FM}, the condition $\psi\in\cA_M$ is equivalent to
\[
\|\psi^{\odot m}(\cdot,\cdot,y)\|_H<\infty,
\]
for all $y\in Y$ and $m\in\bN$.

In analogy to the previous cases, we endow $\cA_M$ with the initial topology $\tau_{\cA_M}$ generated by all seminorms
\[
\psi\longmapsto \|\psi^{\odot m}(\cdot,\cdot,y)\|^{1/m}_H,\qquad y\in Y,\ m\in\bN.
\]
These are seminorms because the symbol correspondence gives
\[
\|\psi^{\odot m}(\cdot,\cdot,y)\|_H
=
\|S_\psi^{\star m}K_{0,y}\|_H
=
\|M_{a_\psi^m}q_\bullet(y)\|_{L^2(\Omega)}
=
\|a_\psi\|_{y,m}^m.
\]
The involution on $\cA_M$ is defined by \eqref{eq:def involution A0} and coincides with the one transported from $\cF_M$ and $\cS_M$:
\begin{equation}
a_{\psi^\dagger}=\overline{a_\psi},\qquad S_{\psi^\dagger}=S_\psi^\dagger.
\end{equation}

\begin{thm}
\label{thm:statements A0}
The following statements hold.
\begin{enumerate}[label=(\arabic*)]
    \item The space $(\cA_0,\tau_{\cA_0})$ is a complete topological vector space that is isomorphic to $(\cF_0,\tau_{\cF_0})$ via the map
    \[
    \psi\mapsto a_\psi.
    \]

    \item The space $(\cA_M,\tau_{\cA_M})$ is a commutative complete topological $*$-algebra with respect to the product $\odot$ and the involution given by \eqref{eq:def involution A0}.
    
    \item The isomorphism $\psi\mapsto a_\psi$ restricts to an isomorphism of topological $*$-algebras making $\cA_M$ isomorphic to $\cF_M$.
    
    \item The algebra $\cA_M$ is largest in the following precise sense: under the symbol map, it corresponds to the largest pointwise multiplicative subalgebra $\cF_M$ of $\cF_0$, and the product $\odot$ is transported from pointwise multiplication of symbols.
    
    \item The following inclusions hold and are continuous:
    \[
    \cA_\infty\subset \cA_M\subset \cA_0.
    \]
    
    \item For every $\psi\in \cA_\infty$ one has
    \[
    \|\psi\|_{\cA_\infty}=\sup_{y\in Y_0}\lim_{m\to\infty}\|\psi^{\odot m}(\cdot,\cdot,y)\|_H^{1/m}.
    \]
\end{enumerate}
\end{thm}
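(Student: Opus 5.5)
The plan is to mirror the proof of Theorem \ref{thm:statements S0}, using the composite bijection
\[
\Theta\colon \cA_0\longrightarrow \cF_0,\qquad \psi\longmapsto a_\psi,
\]
which factors as $\psi\mapsto S_\psi\mapsto \Gamma(S_\psi)$. The first map is bijective by the definition of $\cS_0$ together with the injectivity coming from \eqref{eq:tildeSK is psi}, and $\Gamma$ is bijective by Corollary \ref{cor:equiv S0 A0 F0}. Linearity of $\Theta$ follows from $S_{\alpha\psi_1+\beta\psi_2}=\alpha S_{\psi_1}+\beta S_{\psi_2}$ and the linearity of $\Gamma$.

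For (1), I would use the identity $\|\psi(\cdot,\cdot,y)\|_H=\|S_\psi K_{0,y}\|_H=\|a_\psi\|_y$. The first equality comes from \eqref{eq:tildeSK is psi} with $p=0$, and the second from unitarity of $R$ and Proposition \ref{prop:RK=q}. This shows that $\tau_{\cA_0}$ is exactly the pullback of $\tau_{\cF_0}$ under $\Theta$. Hence $\Theta$ is a homeomorphic linear isomorphism, and completeness transfers from the completeness of $\cF_0$ proved earlier.

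For (2) and (3), note that by definition $\cA_M=\Theta^{-1}(\cF_M)$, and the product $\odot$ on $\cA_M$ is defined precisely so that $\Theta(\varphi\odot\psi)=a_\varphi a_\psi$. The displayed identity $\|\psi^{\odot m}(\cdot,\cdot,y)\|_H^{1/m}=\|a_\psi\|_{y,m}$ shows that $\tau_{\cA_M}$ is the pullback of $\tau_{\cF_M}$.

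The one point needing an actual check is the involution: one must show $a_{\psi^\dagger}=\overline{a_\psi}$. I would derive this from Proposition \ref{prop:tilde S adjoint closable}, which gives $S_{\psi^\dagger}\subset S_\psi^*$ on $\cD_0$. Conjugating by $R$ yields $M_{a_{\psi^\dagger}}\subset (M_{a_\psi})^*\subset (M_{a_\psi}^{\max})^*$ restricted to $R(\cD_0)$. The last operator equals $M_{\overline{a_\psi}}^{\max}$ because $M_{a_\psi}\subset M_{a_\psi}^{\max}$ reverses under adjoints. So the two symbols agree on $R(\cD_0)$, whose elements $\overline{q_\bullet(y)}$ do not vanish simultaneously by Proposition \ref{prop:partition Y0}. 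Uniqueness of the symbol in Theorem \ref{thm:unbdd trans inv diagonalization} then gives $a_{\psi^\dagger}=\overline{a_\psi}$ a.e. Once this is in place, $\Theta|_{\cA_M}$ is an isomorphism of topological $*$-algebras, and completeness, commutativity, and continuity of the product and involution all follow from the corresponding properties of $\cF_M$.

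Statement (4) is Proposition \ref{prop:characterization FM} transported through $\Theta$. I would also add a remark that $\odot$ agrees with the formal product \eqref{eq:def odot product} on $\cA_\infty$, by \cite[Lemma 7.6]{BaisMaximenkoVenkuNaidu2025}.

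For (5), the inclusion $\cA_\infty=\Theta^{-1}(L^\infty(\Omega))$ holds because $S_\psi$ is bounded if and only if $a_\psi\in L^\infty$. Continuity of the inclusions is the pullback of the continuous inclusions $\cF_\infty\subset\cF_M\subset\cF_0$. Here $\cA_\infty$ carries the norm $\|\psi\|_{\cA_\infty}=\|S_\psi\|_{op}=\|M_{a_\psi}\|=\|a_\psi\|_\infty$, so $\Theta$ is isometric onto $L^\infty(\Omega)$.

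Finally, (6) follows from the formula $\|f\|_\infty=\sup_{y\in Y_0}\lim_m\|f\|_{y,m}$ combined with $\|\psi^{\odot m}(\cdot,\cdot,y)\|_H^{1/m}=\|a_\psi\|_{y,m}$. I expect the main obstacle to be the involution compatibility; everything else is bookkeeping through $\Theta$.
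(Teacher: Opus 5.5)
Your proposal is essentially the paper's proof. Everything is pulled back through the symbol map $\psi\mapsto a_\psi$ using $\|\psi(\cdot,\cdot,y)\|_H=\|a_\psi\|_y$ and $\|\psi^{\odot m}(\cdot,\cdot,y)\|_H^{1/m}=\|a_\psi\|_{y,m}$. The paper simply asserts $a_{\psi^\dagger}=\overline{a_\psi}$, so your attempt to verify it adds something.

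There is one slip in that verification. Reversing $M_{a_\psi}\subset M^{\max}_{a_\psi}$ under adjoints gives $M^{\max}_{\overline{a_\psi}}=(M^{\max}_{a_\psi})^*\subset (M_{a_\psi})^*$, which is the opposite of the inclusion you wrote and justified that way.

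The argument still closes. Both $M_{a_{\psi^\dagger}}$ and $M_{\overline{a_\psi}}$, taken on $R(\cD_0)$, are restrictions of the adjoint of the densely defined operator $M_{a_\psi}|_{R(\cD_0)}$, and that adjoint is single-valued. For $M_{\overline{a_\psi}}$ you can also check $\langle a_\psi f,g\rangle=\langle f,\overline{a_\psi}g\rangle$ directly for $f,g\in R(\cD_0)$. Testing on $g=\overline{q_\bullet(y)}$ with $y\in Y_0$ then gives $a_{\psi^\dagger}=\overline{a_\psi}$ almost everywhere.
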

\begin{proof}
The map $\Lambda:\cA_0\to\cF_0$, $\Lambda(\psi)=a_\psi$, is bijective by Corollary \ref{cor:equiv S0 A0 F0} and the bijection $\psi\mapsto S_\psi$ between kernels and operators. Furthermore,
\[
\|\psi(\cdot,\cdot,y)\|_H=\|S_\psi K_{0,y}\|_H=\|a_\psi\|_y,
\]
so $\tau_{\cA_0}$ is the pullback of $\tau_{\cF_0}$. This proves completeness of $\cA_0$.

By definition, $\cA_M=\Lambda^{-1}(\cF_M)$, and the preceding seminorm identity shows that $\tau_{\cA_M}$ is the pullback of $\tau_{\cF_M}$. Moreover,
\[
\Lambda(\varphi\odot\psi)=\Lambda(\varphi)\Lambda(\psi),
\qquad
\Lambda(\psi^\dagger)=\overline{\Lambda(\psi)}.
\]
Thus $\Lambda$ restricts to an isomorphism of topological $*$-algebras from $\cA_M$ onto $\cF_M$. The algebraic, topological and completeness
follow from the corresponding assertions for $\cF_M$.

The maximality statement is again the pullback of Proposition \ref{prop:characterization FM}: any subspace of $\cA_0$ closed under the transported product has a symbol set contained in $\cF_M$. The continuous inclusions and the formula for the norm on $\cA_\infty$ follow from the corresponding symbol-side statements.
\end{proof}
We remark that Bais, Maximenko and Venku Naidu \cite{BaisMaximenkoVenkuNaidu2025} endowed the algebra $\cA_\infty$ (denoted by $\cA$ in that work) with the norm given by transporting the norm from $L^\infty(\Omega)$. In contrast, the last formula in the above theorem provides a direct and intrinsic way to compute this norm.

Summarizing the results from the previous sections, we present the following diagram. The horizontal arrows are continuous inclusions while the vertical ones are isomorphisms:
\[
\begin{tikzcd}
L^\infty(\Omega)=\mathcal{F}_\infty \arrow[r, hook] \arrow[d] &
\mathcal{F}_M \arrow[r, hook] \arrow[d] &
\mathcal{F}_0 \arrow[d] \\
\cC(\rho)=\mathcal{S}_\infty \arrow[r, hook] \arrow[d] &
\mathcal{S}_M \arrow[r, hook] \arrow[d] &
\mathcal{S}_0 \arrow[d] \\
\mathcal{A}_\infty \arrow[r, hook] &
\mathcal{A}_M \arrow[r, hook] &
\mathcal{A}_0
\end{tikzcd}
\]

Note that our results in this case give an answer to the problem posed in \cite[Remark 7.12]{BaisMaximenkoVenkuNaidu2025} asking whether it was possible to extend the multiplication in $\mathcal A$ to an operation on the space $\cA_0$, as the following result shows.
\begin{cor}
\label{cor:extension odot}
    The following conditions are equivalent:
    \begin{enumerate}[label=(\arabic*)]
        \item The formal product \eqref{eq:def odot product} is $\cA_0$-valued for every pair $\varphi,\psi\in\cA_0$ and agrees with the product transported from pointwise multiplication of symbols; equivalently, the bounded kernel product on $\cA_\infty$ extends to an operation
        \begin{equation*}
        \odot\colon \cA_0\times\cA_0\to \cA_0
        \end{equation*}
        compatible with the symbol correspondence.
        \item $\cA_M=\cA_0$.
        \item $\cS_M=\cS_0$.
        \item $\cF_M=\cF_0$.
    \end{enumerate}
\end{cor}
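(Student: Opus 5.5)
The equivalences (2)$\iff$(3)$\iff$(4) are the easy part, and I would dispose of them first. By Theorem \ref{thm:statements A0}(1) the symbol map $\Lambda\colon\cA_0\to\cF_0$ is a bijection with $\cA_M=\Lambda^{-1}(\cF_M)$. Hence $\cA_M=\cA_0$ holds exactly when $\cF_M=\cF_0$. In the same way, Theorem \ref{thm:statements S0}(1) gives a bijection $\Gamma\colon\cS_0\to\cF_0$ with $\cS_M=\Gamma^{-1}(\cF_M)$, so $\cS_M=\cS_0$ holds exactly when $\cF_M=\cF_0$. Nothing beyond these bijections is needed.

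The substantive equivalence is (1)$\iff$(4). For (4)$\Rightarrow$(1), assume $\cF_M=\cF_0$. Then $\cA_M=\cA_0$, and the transported product $\odot$ from Section \ref{sec:algebras A S} is defined on all of $\cA_0\times\cA_0$. It is $\cA_0$-valued, compatible with the symbol correspondence, and agrees with the formal product \eqref{eq:def odot product} whenever the latter lies in $\cA_0$.

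To get the stronger reading of (1), namely that the formal integral itself lands in $\cA_0$, I would compute $\varphi\odot\psi$ pointwise on the symbol side. Write
\[
\varphi(x-s,y,t)=R^*\big(\overline{q_\bullet(t)}a_\varphi\big)(x-s,y)
\]
as in the proof of Proposition \ref{prop:characterization F0}. Integrating over $(s,t)$ then turns the integral into an $L^2$ inner product against $\psi(s,t,v)=R^*\big(\overline{q_\bullet(v)}a_\psi\big)(s,t)$. By unitarity of $R$ (Parseval), for fixed $(x,y,v)$,
\[
(\varphi\odot\psi)(x,y,v)=\int_\Omega a_\varphi(\xi)a_\psi(\xi)\,\overline{q_\xi(v)}\,q_\xi(y)\,\xi(x)\,d\widehat\nu(\xi).
\]
This works because both factors are genuinely $L^2$ functions of $(s,t)$, with $\overline{\varphi(x-\cdot,y,\cdot)}\in H$ by \eqref{eq:A0 condition 2}. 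The right-hand side equals $R^*\big(a_\varphi a_\psi\overline{q_\bullet(v)}\big)(x,y)$, which is the kernel with symbol $a_\varphi a_\psi$, provided $a_\varphi a_\psi\in\cF_0$; that holds since $\cF_0=\cF_M$ is an algebra. Thus the formal product always lies in $\cA_0$ and coincides with the transported one.

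For (1)$\Rightarrow$(4), assume $\odot$ maps $\cA_0\times\cA_0$ into $\cA_0$ compatibly with symbols. Then $\Lambda(\cA_0)=\cF_0$ is closed under pointwise multiplication, so $\cF_0$ is itself a subalgebra of $\cF_0$. By the maximality in Proposition \ref{prop:characterization FM}, $\cF_0\subset\cF_M$, and therefore $\cF_0=\cF_M$.

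The main obstacle is the Parseval step in (4)$\Rightarrow$(1). It requires a careful check that the inner product identity holds pointwise, including the conjugation conventions in $R^*$ and in the identity $\psi^\dagger\leftrightarrow\overline{a_\psi}$, rather than merely almost everywhere. If this becomes delicate, I would fall back on reading (1) as stated through its ``equivalently'' clause, i.e.\ compatibility with the symbol correspondence. Under that reading, (4)$\Rightarrow$(1) is immediate from the definition of the transported product.
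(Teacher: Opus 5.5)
Your proposal is correct and follows the paper's skeleton. You get (2)$\iff$(3)$\iff$(4) by pulling back along the symbol bijections $\Lambda$ and $\Gamma$, (1)$\Rightarrow$(4) from the maximality in Proposition~\ref{prop:characterization FM}, and (4)$\Rightarrow$(1) from the transported product.

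Where you differ is the Parseval computation, and it is a genuine improvement. The paper's proof of (4)$\Rightarrow$(1) only asserts that the transported product agrees with the formal integral ``whenever the latter is interpreted as an element of $\cA_0$''. That rests on the ``one can check'' remark preceding Theorem~\ref{thm:statements A0}, and it does not by itself show that the formal integral is $\cA_0$-valued. Your identity does show this.

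The step you flag as the main obstacle is unproblematic, so no fallback to the weaker reading is needed:
\begin{itemize}
\item For fixed $(x,y,v)$, the formal integral is literally the $L^2(G\times Y)$ inner product of $\psi(\cdot,\cdot,v)\in H$ with $h(s,t)=\overline{\varphi(x-s,y,t)}$.
\item A direct check from \eqref{eq:def Rstar} gives $h=R^*\big(E_{-x}\overline{a_\varphi}\,\overline{q_\bullet(y)}\big)$. Unitarity of $R$ then gives your formula exactly, not merely almost everywhere.
\item Formula \eqref{eq:def Rstar} holds pointwise for every $f\in L^2(\Omega,\widehat\nu)$. Indeed $R^*f(x,v)=\langle f,RK_{x,v}\rangle$, and the integral converges absolutely by Cauchy--Schwarz.
\end{itemize}

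Your formula also yields more than the paper states. The integral $\int_\Omega a_\varphi a_\psi\,\overline{q_\xi(v)}\,q_\xi(y)\,\xi(x)\,d\widehat\nu(\xi)$ converges absolutely for all $\varphi,\psi\in\cA_0$. Combine this with injectivity of the Fourier transform on $L^1(\widehat G)$, applied to the weighted differences for $y\in Y_0$. You get that the formal product lies in $\cA_0$ exactly when $a_\varphi a_\psi\in\cF_0$, and in that case its symbol is automatically $a_\varphi a_\psi$. So the compatibility clause in (1) is not an extra hypothesis, and (1)$\Rightarrow$(4) can be run using only that the formal product is $\cA_0$-valued.
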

\begin{proof}
If (1) holds, then the symbol of $\varphi\odot\psi$ is $a_\varphi a_\psi$ for every $\varphi,\psi\in\cA_0$. Hence the symbol set $\cF_0$ is closed under pointwise multiplication. By Proposition \ref{prop:characterization FM}, every multiplicative subalgebra of $\cF_0$ is contained in $\cF_M$, while always $\cF_M\subset\cF_0$. Thus $\cF_M=\cF_0$. Conversely, if $\cF_M=\cF_0$, then Theorem \ref{thm:statements A0} gives $\cA_M=\cA_0$, and the transported product defines the desired operation on all of $\cA_0$; by the discussion preceding Theorem \ref{thm:statements A0}, it agrees with the formal integral product whenever the latter is interpreted as an element of $\cA_0$.

The equivalences with $\cA_M=\cA_0$ and $\cS_M=\cS_0$ follow from the topological $*$-algebra isomorphisms between the symbol, operator and kernel models in Theorems \ref{thm:statements S0} and \ref{thm:statements A0}.
\end{proof}

\begin{cor}
    If $(\cA_0,\tau_{\cA_0})$ is a Fréchet space and $G$ is not compact, then the bounded kernel product cannot be extended to a symbol-compatible operation $\odot\colon \cA_0\times\cA_0\to\cA_0$.
\end{cor}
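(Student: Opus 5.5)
The plan is to pass through the isomorphisms on the kernel and symbol sides and then invoke Theorem \ref{thm:particular characterization A0=AM}. Argue by contradiction: assume there is a symbol-compatible operation $\odot\colon\cA_0\times\cA_0\to\cA_0$ extending the bounded kernel product. This is exactly condition (1) of Corollary \ref{cor:extension odot}, so that corollary gives $\cF_M=\cF_0$.

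Next we must show that $(\cF_0,\tau_{\cF_0})$ is Fréchet, since that is the hypothesis of Theorem \ref{thm:particular characterization A0=AM}. By part (1) of Theorem \ref{thm:statements A0}, the map $\Lambda\colon\psi\mapsto a_\psi$ is a linear bijection $\cA_0\to\cF_0$. Moreover, $\tau_{\cA_0}$ is precisely the pullback of $\tau_{\cF_0}$, because $\|\psi(\cdot,\cdot,y)\|_H=\|a_\psi\|_y$ for every $y\in Y$. Hence $\Lambda$ is an isomorphism of topological vector spaces.

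Metrizability and completeness are preserved under such an isomorphism. So the Fréchet assumption on $\cA_0$ transfers to $\cF_0$; equivalently, by Corollary \ref{cor:F0 frechet countable seminorms}, the topology is generated by countably many of the seminorms $\|\cdot\|_y$. Theorem \ref{thm:particular characterization A0=AM} then yields that $G$ is compact. This contradicts the hypothesis, so no such extension exists. Alternatively, one can cite Corollary \ref{cor:G not compact both inclusions strict} directly to get $\cF_M\subsetneq\cF_0$, which contradicts (4) of Corollary \ref{cor:extension odot}.

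There is no real obstacle here. The only point requiring care is the transfer of the Fréchet property, which holds because the topologies are defined as pullbacks of one another through the bijection $\Lambda$.
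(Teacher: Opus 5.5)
Your proposal is correct and follows the paper's proof essentially step for step. You transfer the Fréchet property from $\cA_0$ to $\cF_0$ through the topological isomorphism of Theorem \ref{thm:statements A0}, use Corollary \ref{cor:extension odot} to obtain $\cF_M=\cF_0$ from the assumed extension, and then invoke Theorem \ref{thm:particular characterization A0=AM} to force $G$ to be compact, contradicting the hypothesis.
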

\begin{proof}
By Theorem \ref{thm:statements A0}, the space $(\cA_0,\tau_{\cA_0})$ is topologically isomorphic to $(\cF_0,\tau_{\cF_0})$. Hence $\cF_0$ is Fr\'echet. If such a symbol-compatible extension existed, Corollary \ref{cor:extension odot} would imply $\cF_M=\cF_0$, and then Theorem \ref{thm:particular characterization A0=AM} would force $G$ to be compact. This contradicts the hypothesis.
\end{proof}

\section{Three examples}
\label{sec:examples}

In this section, we illustrate how the results obtained above are manifested in three specific examples from \cite{BaisMaximenkoVenkuNaidu2025}. We follow closely the definitions and notation introduced therein. To keep the paper at a reasonable length, we restrict ourselves to these representative examples and do not discuss the underlying constructions in detail. For further information, we refer the reader to \cite{BaisMaximenkoVenkuNaidu2025,HerreraYanezMaximenkoRamosVazquez2022}. A more detailed analysis of these and other examples is left for future work.

\subsection{Vertical operators on the Bergman space over the upper half-plane}

Let $\Pi=\{z\in\bC\colon \operatorname{Im}(z)>0\}$ denote the upper half-plane and let $L^2_{\text{hol}}(\Pi)$ be the space of holomorphic functions on $\Pi$ which are square integrable with respect to the usual area Lebesgue measure restricted to $\Pi$. As is well-known, $L^2_{\text{hol}}(\Pi)$ is a reproducing kernel Hilbert space. Identifying $\Pi=\bR\times\bR_+$, this space is an example of the RKHS considered in this paper.

Here $G=\bR$, $Y=\bR_+$, $\nu$ is the usual Lebesgue measure and $\lambda$ is the Lebesgue measure restricted to $\bR_+$. The group $\bR$ acts on $f\in L^2_{\text{hol}}(\Pi)$ by
\[
(\rho(t)f)(z)=f(z-t),\qquad t\in \bR,\ z\in\Pi.
\]

Its reproducing kernel is given by
\[
K_{u,v}(x,y)=-\frac{1}{\pi((x-u)+i(y+v))^2},\qquad (u,v),(x,y)\in \bR\times\bR_+.
\]
In \cite{HerreraYanezMaximenkoRamosVazquez2022} it was shown that the Fourier transform of $K_{(0,v)}(\cdot,y)$ is
\[
L_{\xi,v}(y)=4\pi\xi e^{-2\pi(y+v)\xi}\chi_{\bR_+}.
\]
Therefore, $\Omega=\bR_+$ and
\[
q_\xi(y)=2\sqrt{\pi\xi}e^{-2\pi y\xi}.
\]
Note that in this case the common domain for the operators in the spaces $\cS_M$ and $\cS_0$ is
\[
R(\cD_0)=
\operatorname{span}
\left\{
\xi\mapsto 2\sqrt{\pi\xi} e^{i2\pi x\xi}e^{-2\pi y\xi}\colon x\in \bR,\ y\in Y
\right\}.
\]
We obviously have $q_\bullet(y_2)<q_\bullet(y_1)$ for $y_2> y_1$, which implies that the sequence of seminorms $(\|\cdot\|_{1/k})_{k\in \bN}$ generates the topology $\tau_{\cF_0}$. Hence, $(\cF_0,\tau_{\cF_0})$ is a Fréchet space and $(\cF_M,\tau_{\cF_M})$ a Fréchet $*$-algebra.
By Corollary \ref{cor:G not compact both inclusions strict}, we have
\[
L^\infty(\Omega)\subsetneq \cF_M\subsetneq \cF_0.
\]
However, we can show this directly.

In \cite[Remark 8.2]{BaisMaximenkoVenkuNaidu2025} the authors showed that $\cA\subsetneq\cA_0$ (equivalently, that $\cF=L^\infty(\Omega)\subsetneq \cF_0$) by showing that the unbounded function $c(\xi)=\xi$ belongs to $\cF_0$. Note that $c^m\in L^2(\Omega,q_\bullet(y)^2d\widehat{\nu})$ for all $m\in\bN$. Therefore, indeed one has $c\in \cF_M$.

An explicit example showing that $\cF_M\subsetneq \cF_0$ is provided by
\[
d(\xi)=\xi^{-3/4}\chi_{[0,1]}(\xi).
\]
For every $y>0$,
\[
\|d\|_y^2
=4\pi\int_0^1 \xi^{-1/2}e^{-4\pi y\xi}\,d\xi
<\infty,
\]
since $e^{-4\pi y\xi}\le 1$ on $[0,1]$. On the other hand,
\[
\|d^2\|_y^2
=4\pi\int_0^1 \xi^{-2}e^{-4\pi y\xi}\,d\xi
\ge
4\pi e^{-4\pi y}\int_0^1 \xi^{-2}\,d\xi
=\infty.
\]
Hence
\[
d\in \cF_0\setminus \cF_M.
\]

\subsection{Radial operators on the Bergman space over the unit disk}

Consider now the unit disk $\bD=\{z\in\bC\colon |z|<1\}$ and the Bergman space $L^2_{\text{hol}}(\bD)$ consisting of holomorphic functions on $\bD$ which are square integrable with respect to the usual area Lebesgue measure restricted to $\bD$. This is also a RKHS with reproducing kernel
\[
K_{w}(z)=\frac{1}{\pi(1-z\overline{w})^2},\qquad z,w\in\bD.
\]

Passing to polar coordinates, we can identify $\bD\cong [0,1)\times\bT$ and regard $L^2_{\text{hol}}(\bD)$ as a subspace of
$L^2([0,1)\times\bT,\nu\otimes\lambda)$, where $d\nu$ is the Haar measure of $\bT$ and $d\lambda(r)=rdr$.
Hence, in this case $G=\bT$ and $Y=[0,1)$. The action of $\bT$ on $f\in L^2_{\text{hol}}(\bD)$ is given by
\[
(\rho(t)f)(z)=f(t^{-1}z),\qquad t\in\bT,\ z\in \bD.
\]
As was shown in \cite{HerreraYanezMaximenkoRamosVazquez2022,BaisMaximenkoVenkuNaidu2025}, for this space we have $\Omega=\bN_0\coloneqq\bN\cup\{0\}$,
\[
L_{k,s}(r)=2(k+1)(rs)^k\chi_{\bN_0}(k)
\]
and
\[
q_k(r)=\sqrt{2(k+1)}r^k.
\]
In this case, we call the elements of $\cS_0$ \emph{radial operators}. The space $\cF_0$ in this case consists of sequences $(a_k)_{k\in \bN_0}$ such that
\begin{equation}
  \label{eq:Bergman radial def |ak|y}
  \|(a_k)\|_y^2=\sum_{k=0}^\infty2(k+1)|a_k|^2y^{2k}<\infty,\qquad\forall y\in[0,1).
\end{equation}

Note that, since $\bT$ is compact, Theorem \ref{thm:particular characterization A0=AM} and Corollary \ref{cor:A=AM implies compactness} do not apply. However, one can show explicitly that $\cF_\infty\subsetneq\cF_M$ but $\cF_M=\cF_0$.

The first inclusion follows easily from Proposition \ref{prop:F=F0 iff F=FM}, since it suffices to prove that $\cF_\infty\subsetneq\cF_0$. Consider, for example, the unbounded sequence $(a_k)_{k\in\bN_0}$ given by $a_k=\sqrt{k}$, for all $k\in\bN_0$. Then for any $y\in[0,1)$ one has
\[
\|(a_k)\|_y^2=
\sum_{k=0}^\infty
2k(k+1)y^{2k}=\frac{4y^2}{(1-y^2)^3}<\infty.
\]

As for the other inclusion, let $(a_k)\in \cF_0$. Then \eqref{eq:Bergman radial def |ak|y} holds for all $y\in [0,1)$ and this implies that the radius of convergence of the power series with coefficients $2(k+1)|a_k|^2$ is at least $1$.
By the Cauchy--Hadamard theorem, one has
\[
\limsup_{k\to\infty}\sqrt[k]{2(k+1)|a_k|^2}
=
\limsup_{k\to\infty}\sqrt[k]{|a_k|^2}
\leq 1.
\]
Hence, for every $m\in\bN$ one has
\[
\limsup_{k\to\infty}\sqrt[k]{2(k+1)|a_k|^{2m}}
=
\limsup_{k\to\infty}\sqrt[k]{|a_k|^{2m}}
\leq 1,
\]
which implies that, for every $y\in [0,1)$,
\[
\|(a_k)^m\|_y^2=\sum_{k=0}^\infty 2(k+1)|a_k|^{2m}y^{2k}<\infty.
\]
Thus $(a_k)\in\cF_M$, and therefore $\cF_M=\cF_0$.

\subsection{Radial operators on the Fock space}

Finally, let $F^2(\bC)$ denote the Fock space consisting of all holomorphic functions $f$ in $\bC$ such that
\[
\|f\|_{F^2(\bC)}^2\coloneqq
\frac{1}{\pi}\int_{\bC}|f(z)|^2e^{-|z|^2}dA(z),
\]
where $dA$ is the usual Lebesgue area measure on $\bC$.
This is a RKHS with reproducing kernel
\[
K_w(z)=e^{z\overline{w}},\qquad z,w\in \bC.
\]
One can easily see that the Fock space is invariant under the action of $\bT$ given by
\[
\rho(t)f(z)=f(t^{-1}z),\qquad t\in \bT,\ z\in \bC.
\]
As was done for the Bergman space on $\bD$, passing to polar coordinates, one can see that $F^2(\bC)$ can be embedded in the space
\[
L^2\left(\bT\times\bR_+,d\nu\otimes\frac{1}{\pi}re^{-r^2}dr\right),
\]
where $\nu$ is again the Haar measure of $\bT$.

Expanding $K_w(z)=e^{z\overline{w}}=\sum^\infty_{k=0}\frac{(z\overline{w})^k}{k!}$, one can see that $\Omega=\bN_0$ with
\[
L_{k,s}(r)=2\frac{(rs)^k}{k!}\chi_{\bN_0}(k),
\]
and thus
\[
q_k(r)=\sqrt{\frac{2}{k!}}r^k.
\]
Therefore, the space $\cF_0$ consists of all sequences $(a_k)_{k\in\bN_0}$ such that
\[
\|(a_k)\|_y^2=\sum_{k=0}^\infty\frac{2|a_k|^2y^{2k}}{k!}<\infty,
\qquad y\in\bR_+.
\]
By a similar argument as in the Bergman space on $\bD$, one can show that $\cF_\infty\subsetneq\cF_M$; for instance, $a_k=k$ gives an unbounded sequence belonging to $\cF_M$.

However, in this case we also have $\cF_M\subsetneq\cF_0$. Consider the sequence $(a_k)_{k\in\bN_0}$ given by
\[
a_k=(k!)^{1/4},\qquad k\in\bN_0.
\]
Then for every $y>0$ one has
\begin{equation}
    \label{eq:norm Fock ex FM not F0}
    \|(a_k)\|_y^2=
\sum_{k=0}^\infty\frac{2y^{2k}}{\sqrt{k!}}.
\end{equation}
By Stirling's approximation,
\[
\limsup_{k\to\infty}
\sqrt[k]{\frac{2}{\sqrt{k!}}}
=0.
\]
Hence the Cauchy--Hadamard theorem implies that the series in \eqref{eq:norm Fock ex FM not F0} converges for every $y\geq 0$, whence $(a_k)\in \cF_0$.

However, one has
\[
\|(a_k)^2\|_y^2=
\sum_{k=0}^\infty 2y^{2k},
\]
which diverges for $y\geq 1$. Thus $(a_k)\in\cF_0\setminus\cF_M$.

Comparing this and the previous example shows that compactness of the group $G$ is not enough to guarantee that $\cF_M=\cF_0$.

\section{Open questions}
\label{sec:questions}

We end this work by stating the open questions and problems collected throughout this paper.

\begin{enumerate}
    \item Do the hypotheses on the space $L^2(G\times Y,\nu\times\lambda)$ suffice to prove that $\cF_0$ is a Fréchet space?

    \item Find a complete characterization of the equalities $\cF_\infty=\cF_M$ and $\cF_M=\cF_0$. Does one need the Fréchet assumption?

    \item Give an explicit characterization of the space $\cD=\bigcap_{a\in\cF_M}\operatorname{Dom}(M_a^{\text{max}})$ from Remark \ref{rem:S0 is Ostar algebra}.

    \item If the direct integral decomposition of $H$ is not one-dimensional, one may still be able to define the space $\cF_0$ through diagonalization of translation-invariant operators. These spaces will consist of matrix-valued functions (see \cite{HerreraYanezMaximenkoRamosVazquez2022}). How far can one extend the results from this paper to this non-commutative setting?

    \item Characterize when special classes of operators (e.g. Toeplitz operators) belong to the class $\cS_M$.
\end{enumerate}

\section*{Acknowledgements}

This work was supported by SECIHTI through the program
\emph{Estancias posdoctorales por México}, CVU 860740.

\printbibliography

\section*{Author information}

\noindent\textbf{Miguel Angel Rodriguez Rodriguez}\\
Escuela Superior de Física y Matemáticas, Instituto Politécnico Nacional\\
Ciudad de México, C.P. 07738, Mexico\\
\href{mailto:miarodriguezro@ipn.mx}{miarodriguezro@ipn.mx}\\
\href{https://orcid.org/0000-0002-5124-8271}{ORCID: 0000-0002-5124-8271}

\end{document}